\newtheorem{theorem}{Theorem}[section]
\newtheorem{lemma}[theorem]{Lemma}
\newtheorem{corollary}[theorem]{Corollary}
\newtheorem{definition}[theorem]{Definition}
\newtheorem{example}[theorem]{Example}
\newtheorem{claim}{Claim}
\DeclareMathOperator{\RN}{RN}
\newcommand{\uhr}{\upharpoonright}
\renewcommand{\phi}{\varphi}
\title{The complexity of the set of validities of a theory
}
\author{
Denis R. Hirschfeldt\thanks{Partially supported by NSF grant DMS-1854279.} \\ University of Chicago \and
Henry Towsner\thanks{Supported by NSF grant DMS-2054379.} \\ University of Pennsylvania \and
Scott Weinstein \\ University of Pennsylvania}
\begin{document}
\maketitle

\section{Introduction}\label{intro-sec}

In this paper, we study the collection of first-order logical schemata all of whose instances are theorems of a given theory $T$; we call these the \textit{validities of} $T$ ($\mathsf{V}(T)$). It is easy to see (Corollary \ref{vt-cor} below) that if $T$ is a decidable theory, then $\mathsf{V}(T)$ is distinct from the set of valid formulas of first-order logic as customarily understood. We provide a complete model-theoretic characterization of the complexity, in the sense of Turing degree, of $\mathsf{V}(T)$ for decidable theories $T$; answer a question posed by
\cite{Vau60} concerning the complexity of the collection of validities
common to all decidable theories; and begin the study of the case of undecidable theories.

\section{The validities of a theory}\label{voft-sec}

We fix a first-order language $S$ with effective syntax, without identity, without constant or function symbols, and with countably many relation symbols $X^n_i$ of arity $n$, for each $n\geq 0$. We think of $S$ as a schematic language for formulating validities. We use  $L$ to denote an effective first-order language with vocabulary disjoint from $S$ that may contain identity, function symbols, and constants.\footnote{In somewhat dated terms, $S$ corresponds to the language of the pure predicate calculus without identity, and the languages $L$  to applied predicate calculi with identity. See, for example, \cite{kleene1952introduction}.} 
 \begin{definition}
    Let $\varphi(X^1_1, \ldots,X^1_k,\ldots,X^n_1, \ldots,X^n_k)$ be a formula of the first-order language $S$ all of whose predicate symbols are among those exhibited, and let 
    \[
    \psi^1_1(y_1), \ldots,\psi^1_k(y_1),\ldots,\psi^n_1(y_1,\ldots,y_n), \ldots,\psi^n_k(y_1,\ldots,y_n)
    \]
    be formulas of a first-order language $L$ in which the displayed variables $y_i$ all appear free. Then 
    \[
    \varphi(\psi^1_1(y_1), \ldots,\psi^1_k(y_1),\ldots,\psi^n_1(y_1,\ldots,y_n), \ldots,\psi^n_k(y_1,\ldots,y_n))
    \]
is the formula of $L$ obtained by replacing $X^j_i(t_{i1}\cdots t_{ij})$ with $\psi^j_i(t_{i1},\ldots,t_{ij})$ in $\varphi$, for $1\leq i \leq k$ and $1\leq j \leq n$.
  \end{definition}

We write $\mathsf{V}$ for the set of valid formulas of $S$. The following well-known theorem summarizes two of the fundamental results of mathematical logic, the G\"{o}del Completeness Theorem and the Church-Turing Undecidability Theorem.
 \begin{theorem}\label{gct-thm}
    $\mathsf{V}$ is a $\Sigma_1$-complete set.
  \end{theorem}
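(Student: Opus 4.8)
The plan is to prove the two halves separately, since the statement bundles together $\Sigma_1$-membership (upper bound) and $\Sigma_1$-hardness (lower bound, i.e.\ completeness). Recall $\mathsf{V}$ is the set of valid formulas of the schematic language $S$, which is just ordinary first-order logic without identity, without function or constant symbols, over the relation symbols $X^n_i$.

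For the upper bound, the claim is that $\mathsf{V}$ is $\Sigma_1$ (recursively enumerable). This is exactly the Gödel Completeness Theorem in its effective form: a formula $\varphi$ of $S$ is valid if and only if it is provable in a fixed sound and complete proof system for first-order logic. Since $S$ has effective syntax, one may fix a standard Hilbert-style or sequent calculus and recursively enumerate all finite proofs; $\varphi$ is valid precisely when it appears as the conclusion of some enumerated proof. The set of (codes of) proofs is decidable, so the set of provable formulas is $\Sigma_1$. I would state this cleanly by invoking completeness to equate semantic validity with syntactic derivability, and then observe that derivability in an effective calculus is r.e.

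For the lower bound, I need $\Sigma_1$-hardness: every $\Sigma_1$ set $m$-reduces to $\mathsf{V}$. This is the Church--Turing undecidability direction, and here the main work lies. The standard route is to reduce the halting problem (or an arbitrary $\Sigma_1$ set) to validity. One takes a Turing machine (or a finitely axiomatized undecidable theory, such as a fragment of arithmetic or a theory of a single binary relation encoding machine computations) and, for each input, effectively constructs a sentence $\varphi$ of $S$ such that $\varphi$ is valid iff the machine halts. Concretely, one encodes the finitely many axioms describing the step-by-step behavior of the machine as a conjunction $\alpha$ of formulas in the relation symbols $X^n_i$, and writes $\varphi := \alpha \to \beta$, where $\beta$ asserts that a halting configuration is reached; validity of $\alpha\to\beta$ then corresponds to the machine halting on every model of the computation axioms, which one arranges to coincide with halting outright. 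The reduction $n \mapsto \varphi_n$ is computable, giving $\Sigma_1$-hardness.

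The main obstacle is the lower bound, specifically carrying out the encoding of computation within the weak language $S$: since $S$ has no identity, no function symbols, and no constants, the usual arithmetical encodings are not directly available, and one must express the machine's transition relation and tape contents purely through relation symbols and quantification. I would handle this by using auxiliary relation symbols to simulate equality and to index tape cells and time steps, being careful that the constructed sentence is genuinely in the relation-only language $S$. Since the theorem is labeled \emph{well-known} and packages two foundational results, I would keep the proof brief: cite the effective Completeness Theorem for the upper bound and the Church--Turing undecidability of pure predicate logic (e.g.\ via \citet{kleene1952introduction}) for the hardness, indicating the reduction rather than grinding through the full syntactic encoding.
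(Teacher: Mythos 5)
Your proposal is correct and takes essentially the same approach as the paper, which offers no proof at all: it simply records that the theorem packages the G\"{o}del Completeness Theorem (the $\Sigma_1$ upper bound, validity being equivalent to derivability in an effective calculus) and the Church--Turing Undecidability Theorem (the hardness direction, via a computable reduction from the halting problem), exactly the decomposition you give. Your added attention to the one real technicality --- carrying out the encoding in a purely relational language without identity by simulating equality with an auxiliary congruence-like relation --- is the standard and correct resolution, and is consistent with how the paper itself avoids identity elsewhere (e.g.\ the preorders in the proof of Theorem \ref{dec-non-a0cat-thm}).
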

  
  The next definition introduces the notion of the ``set of validities of a theory,'' the central concept of this paper. 
  
 \begin{definition}\label{voft-def}
 A set of sentences $T$ of a first-order language $L$ is an
 \emph{$L$-theory} if and only if $T$ is closed under logical
 consequence. We assume all theories we consider are consistent.
 
 Let $T$ be an $L$-theory and let
 $\varphi(X^1_1, \ldots,X^1_k,\ldots,X^n_1, \ldots,X^n_k)$ be a sentence of the first-order language $S$ all of whose predicate symbols are among those exhibited. The sentence $\varphi$ is a validity of $T$ if and only if for all $L$-formulas
 \[
    \psi^1_1(y_1), \ldots,\psi^1_k(y_1),\ldots,\psi^n_1(y_1,\ldots,y_n), \ldots,\psi^n_k(y_1,\ldots,y_n)
    \]
    in which the displayed variables $y_i$ all appear free,
    \[
    \varphi(\psi^1_1(y_1), \ldots,\psi^1_k(y_1),\ldots,\psi^n_1(y_1,\ldots,y_n), \ldots,\psi^n_k(y_1,\ldots,y_n))
    \]
is a member of $T$.

We write $\mathsf{V}(T)$ for the set of valid sentences of $T$.   
   \end{definition}
   
   It is evident that for every theory $T$, $\mathsf{V}\subseteq\mathsf{V}(T).$ It is well-known that identity holds for some computably axiomatizable theories. The following result is a corollary to the ``Hilbert-Bernays Completeness Theorem.''\footnote{\textit{Cf.} Theorem 36 of \cite{kleene1952introduction}. The theory $\mathbf{PA}$ is first-order Peano Arithmetic. A perspicuous, new proof of this result is presented by \cite{ebbs}.} 
 \begin{theorem}\label{hb-comp-thm}
$\mathsf{V}(\mathbf{PA})=\mathsf{V}$.
\end{theorem}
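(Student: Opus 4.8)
The plan is to prove the two inclusions separately. The inclusion $\mathsf{V}\subseteq\mathsf{V}(\mathbf{PA})$ is immediate: if $\varphi\in\mathsf{V}$ and $\psi^1_1,\ldots,\psi^n_k$ are any admissible $L$-formulas, then by the substitution lemma for predicate letters the instance $\varphi(\psi^1_1,\ldots,\psi^n_k)$ holds in every $L$-structure under every assignment—for in any such structure the $\psi^j_i$ define relations giving an interpretation of the symbols of $S$, and a valid $S$-sentence holds under all interpretations. Thus the instance is a valid $L$-sentence, hence a logical consequence of $\mathbf{PA}$ and so a member of the theory $\mathbf{PA}$; as the $\psi^j_i$ were arbitrary, $\varphi\in\mathsf{V}(\mathbf{PA})$.

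For the reverse inclusion $\mathsf{V}(\mathbf{PA})\subseteq\mathsf{V}$ I would argue contrapositively. Suppose $\varphi\notin\mathsf{V}$; I will produce admissible arithmetic formulas $\psi^j_i$ whose instance is false in the standard model $\mathbb{N}$. Since every member of $\mathbf{PA}$ is true in $\mathbb{N}$, such an instance cannot belong to $\mathbf{PA}$, witnessing $\varphi\notin\mathsf{V}(\mathbf{PA})$. Because $\varphi$ is not valid, $\neg\varphi$ is satisfiable, and the work is to turn an arbitrary model of $\neg\varphi$ into one that is \emph{defined} over $\mathbb{N}$ by arithmetic formulas.

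This is precisely the content I would draw from the Hilbert--Bernays Completeness Theorem. Run the Henkin construction for the finite—hence decidable—theory consisting of $\neg\varphi$ together with its Henkin axioms over a fresh supply of constants $c_0,c_1,\ldots$. Each stage asks only whether a finite set of sentences is consistent, a $\Pi^0_1$ question, so a complete consistent extension $T^{*}$ can be constructed recursively in $0'$. Here the absence of identity and of function and constant symbols in $S$ is essential: the term model of $T^{*}$ needs no quotient by a provable-equality relation, its domain is simply the countable set of Henkin constants, which I identify with $\mathbb{N}$, and for each relation symbol $X^n_i$ the set $\{\bar{a}:X^n_i(\bar{a})\in T^{*}\}$ is computable from $0'$, hence $\Delta^0_2$ and in particular arithmetical. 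I then let $\psi^n_i(y_1,\ldots,y_n)$ be an $L$-formula defining this set in $\mathbb{N}$; for the finitely many symbols not occurring in $\varphi$ any admissible formula will do, and for nullary symbols one takes a true or false $L$-sentence according to $T^{*}$.

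With these choices the instance $\varphi(\psi^1_1,\ldots,\psi^n_k)$ asserts in $\mathbb{N}$ exactly that $\varphi$ holds in the structure the $\psi^n_i$ define, and under the identification above that structure is the term model of $T^{*}$, a model of $\neg\varphi$. Hence the instance is false in $\mathbb{N}$, completing the contrapositive and the proof. The main obstacle is the definability step of the previous paragraph: it is not enough to satisfy $\neg\varphi$ in some abstract model, one needs the interpreting relations to be genuinely definable in the standard model, and this is what the arithmetized completeness construction provides—with the caveat, easy to overlook, that the identity-free relational setting is what lets the relations of the term model be read off directly from $T^{*}$ without a quotient that would spoil arithmetical definability.
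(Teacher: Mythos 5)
Your proof is correct and takes essentially the same approach as the paper: the paper obtains the theorem as a corollary of the Hilbert--Bernays Completeness Theorem (which it cites without proof), and your argument is precisely the standard proof of that theorem---a Henkin completion computable in $0'$ yielding a term model on $\mathbb{N}$ with $\Delta^0_2$, hence arithmetically definable, relations, whose substitution instance of $\neg\varphi$ is then true in $\mathbb{N}$, so that the instance of $\varphi$ is unprovable in $\mathbf{PA}$ by soundness. One minor wording point: $\{\neg\varphi\}$ together with the Henkin axioms is an infinite (though computable) axiom set rather than a finite one, but this does not affect the $\Pi^0_1$ complexity of the stage-by-stage consistency questions, so your argument stands.
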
  
\section{The validities of decidable theories}\label{dec-sec}
In this section we focus on the complexity of the collection of validities of a decidable theory. 
\begin{theorem}\label{dec-val-pi1-thm}
If $T$ is a \textit{decidable}  theory, then  $\mathsf{V}(T)$
is $\Pi_1$. 
\end{theorem}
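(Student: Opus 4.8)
The plan is to read the definition of $\mathsf{V}(T)$ (Definition \ref{voft-def}) off directly as a $\Pi_1$ condition. A sentence $\varphi$ of $S$ lies in $\mathsf{V}(T)$ precisely when \emph{every} substitution instance $\varphi(\vec{\psi})$, obtained by replacing the finitely many relation symbols of $\varphi$ by $L$-formulas of the correct arities and free variables, belongs to $T$. Since $T$ is decidable and substitution is a mechanical syntactic operation, the matrix ``$\varphi(\vec{\psi})\in T$'' is computable, and the whole condition is a single universal quantifier over a computable predicate.

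Concretely, first I would note that from $\varphi$ one can compute the finite list of relation symbols occurring in it together with their arities. A candidate substitution is then a tuple $\vec{\psi}=(\psi_1,\ldots,\psi_m)$ of $L$-formulas, where $\psi_r$ has the free variables prescribed for the corresponding relation symbol. The collection of all such admissible tuples is a computable set of codes of tuples of formulas, and in fact can be effectively enumerated as $\vec{\psi}_0,\vec{\psi}_1,\ldots$, because the $L$-formulas can be effectively G\"{o}del-numbered and the side conditions on arity and on which variables occur free are decidable.

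Next I would observe that the substitution map $(\varphi,\vec{\psi})\mapsto\varphi(\vec{\psi})$ is a computable function on codes, being a purely textual replacement of each atomic subformula $X^{j}_{i}(t_1\cdots t_{j})$ by $\psi_r(t_1,\ldots,t_{j})$. Hence there is a computable function $g$ with $g(\varphi,n)=\varphi(\vec{\psi}_n)$, and by Definition \ref{voft-def} we have $\varphi\in\mathsf{V}(T)$ if and only if $\forall n\,\bigl(g(\varphi,n)\in T\bigr)$. Because $T$ is decidable, the predicate ``$g(\varphi,n)\in T$'' is computable in $(\varphi,n)$, so a single universal number quantifier over a computable predicate yields a $\Pi_1$ definition of $\mathsf{V}(T)$.

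The only point that needs genuine care, and the closest thing to an obstacle, is the bookkeeping around free variables: one must verify that the enumerated substitution instances are exactly the $L$-sentences whose membership in $T$ is being asserted, so that the quantifier ranges over a truly computable index set and each $\varphi(\vec{\psi}_n)$ is a bona fide sentence to which decidability of $T$ applies. Once the effective enumeration of admissible substitutions and the computability of the substitution operation are in place, the $\Pi_1$ bound is immediate.
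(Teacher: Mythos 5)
Your proof is correct and is essentially the same as the paper's: both read Definition \ref{voft-def} as a universal quantifier over an effective enumeration of admissible substitution tuples, with the matrix ``$\varphi(\vec{\psi})\in T$'' computable by decidability of $T$. Your version simply spells out the bookkeeping (G\"{o}del-numbering, computability of substitution, the free-variable side conditions) that the paper leaves implicit.
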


  \begin{proof}
    Let $\varphi(X_1,\ldots,X_k)$ be an $S$-sentence and suppose that $T$ is a decidable $L$-theory.  We can effectively enumerate all possible choices of $L$-formulas $\psi_1,\ldots,\psi_k$.  By Definition \ref{voft-def}
\[
\varphi(X_1,\ldots,X_k)\in\mathsf{V}(T)
\]
if and only if for all $L$-formulas $\psi_1,\ldots,\psi_k$,
\[
\varphi(\psi_1,\ldots,\psi_k)\in T.
\]
  \end{proof}
In contrast to Theorem \ref{hb-comp-thm}, we have the following result concerning decidable theories.
\begin{corollary}\label{vt-cor}
If $T$ is decidable, $\mathsf{V}\subsetneq\mathsf{V}(T)$. 
\end{corollary}
  \begin{proof}
   The result follows at once from    Theorems \ref{gct-thm} and \ref{dec-val-pi1-thm}.  
   \end{proof}
\begin{example}\label{dlo-ex}
    Consider $T_{dlo}=\mathsf{Th}(\mathbb{Q},<)$, the theory of dense linear orders without endpoints.
Let $\varphi(X)$ be the formula
\[\forall v,w(Xv\leftrightarrow Xw).\]
Then $\varphi(X)\in\mathsf{V}(T_{dlo})$, because for any formula $\psi(y)$ with a single free-variable in the language $\{<,=\}$, either $\forall y\,\psi\in T_{dlo}$
or
$\forall y\,\neg\psi\in T_{dlo}$. 
  \end{example}
   
In the remainder of this section, we explore the interaction between the complexity of $\mathsf{V}(T)$ and $\aleph_0$-categoricity. In particular, in Section \ref{omega-cat-subsec} we show, as a corollary to a result of \cite{Schmerl}, that for every c.e.\ degree $\mathbf{a}\in[\mathbf{0}, \mathbf{0^\prime}]$, there is a decidable $\aleph_0$-categorical theory $T$ with $\mathsf{deg}(\mathsf{V}(T))=\mathbf{a}$. In Section \ref{stab-sec}, we establish that if a decidable theory $T$ is not $\aleph_0$-categorical, then  $\mathsf{V}(T)$ is $\Pi_1$-complete.
\subsection{$\aleph_0$-categorical theories}\label{omega-cat-subsec}

We begin by showing that the set of  valid sentences of the theory of dense linear orders without endpoints is decidable.
 \begin{theorem}\label{dlo-rec-thm}
    $\mathsf{V}(T_{dlo})$ is decidable.
  \end{theorem}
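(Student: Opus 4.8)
The plan is to exploit the fact that $T_{dlo}$ admits effective quantifier elimination (and is complete and decidable) in order to reduce the a priori infinite universal quantification over all substitution instances in Definition \ref{voft-def} to a finite, effectively computable one.

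First I would record the key finiteness phenomenon. Fix an arity $j \geq 0$. The relational language $\{<,=\}$ has only finitely many atomic formulas in the variables $y_1,\dots,y_j$ (the formulas $y_a < y_b$ and $y_a = y_b$), so there are only finitely many quantifier-free formulas in $y_1,\dots,y_j$ up to propositional equivalence, and hence only finitely many up to $T_{dlo}$-equivalence. By effective quantifier elimination, every $L$-formula $\psi(y_1,\dots,y_j)$ is $T_{dlo}$-equivalent to a quantifier-free formula that can be computed from $\psi$. Consequently I can compute, for each $j$, a finite set $R_j$ of quantifier-free representatives such that every $L$-formula in the variables $y_1,\dots,y_j$ is $T_{dlo}$-equivalent to some member of $R_j$. (For $j=0$, completeness of $T_{dlo}$ gives $R_0 = \{\top,\bot\}$, represented by $\forall y\,(y = y)$ and $\exists y\,(y \neq y)$.)

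The second ingredient is a substitution-invariance lemma: if $\psi^j_i$ and $\rho^j_i$ are $L$-formulas with $T_{dlo} \models \forall \bar{y}\,(\psi^j_i \leftrightarrow \rho^j_i)$ for all $i,j$, then $T_{dlo} \models \varphi(\bar{\psi}) \leftrightarrow \varphi(\bar{\rho})$, and therefore $\varphi(\bar{\psi}) \in T_{dlo}$ if and only if $\varphi(\bar{\rho}) \in T_{dlo}$. I would prove this semantically: in any model $M \models T_{dlo}$, the sentence $\varphi(\bar{\psi})$ has the same truth value as $\varphi$ evaluated in the expansion of $M$ in which each $X^j_i$ is interpreted by the $j$-ary relation defined by $\psi^j_i$. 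Since $\psi^j_i$ and $\rho^j_i$ define the same relation in every model of $T_{dlo}$, the two instances agree in all such models, whence the equivalence is a consequence of $T_{dlo}$. Choosing the representatives to be quantifier-free carries the additional benefit that the substitution into $\varphi$ is automatically free of variable capture.

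Combining these, $\varphi(X^1_1,\dots,X^n_k) \in \mathsf{V}(T_{dlo})$ if and only if $\varphi(\bar{\rho}) \in T_{dlo}$ for every one of the finitely many assignments of representatives $\rho^j_i \in R_j$ to the predicate symbols occurring in $\varphi$. As there are only finitely many such assignments, each $R_j$ is computable, and $T_{dlo}$ is decidable, this gives a decision procedure for $\mathsf{V}(T_{dlo})$. I expect the only real work to be the substitution-invariance lemma together with the verification that quantifier elimination lets us replace the unbounded search over all $L$-formulas by the finite search over the sets $R_j$; once these are in hand the decidability is immediate. It is worth noting that this argument uses nothing specific to dense linear orders beyond decidability and \emph{effective} quantifier elimination over a finite relational vocabulary, so the same reasoning should establish decidability of $\mathsf{V}(T)$ for any such $T$.
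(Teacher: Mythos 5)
Your proposal is correct and follows essentially the same route as the paper's proof: use quantifier elimination for $T_{dlo}$ to reduce the universal quantification over all $L$-formulas to finitely many quantifier-free representatives per arity, then decide $\varphi \in \mathsf{V}(T_{dlo})$ by checking the finitely many substitution instances against the decidable theory $T_{dlo}$. The paper states this tersely (counting the Boolean combinations of atomic formulas and iterating through them), while you make explicit the supporting details it leaves implicit, namely the substitution-invariance lemma, effectiveness of the quantifier elimination, and the avoidance of variable capture.
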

  \begin{proof}
$T_{dlo}$ has quantifier elimination, so every formula on $n$ variables is equivalent to a disjunction of a conjunction of clauses $x_i<x_j$ or $x_i=x_j$.  There are at most $2^{3^{n^2}}$ such formulas.
For instance, the only formulas on $2$ variables are the $8$ Boolean combinations of
\[x_1<x_2,\ x_2<x_1,\ \text{and}\ x_1=x_2.\]
So to test the validity of $\varphi(X_1,\ldots,X_k)$ we can iterate through the finitely many choices for each $X_i$.
  \end{proof}

\begin{example}\label{dlo2-ex}
The proof of Theorem \ref{dlo-rec-thm} indicates further validities 
in $\mathsf{V}(T_{dlo})\setminus \mathsf{V}$ along the lines of Example \ref{dlo-ex}. For example, the following sentence is in $\mathsf{V}(T_{dlo})\setminus \mathsf{V}$.
\[
\bigvee_{1\leq i<j\leq 9}(\forall y_1)(\forall y_2)(X_i(y_1,y_2)\leftrightarrow X_j(y_1,y_2)).
\]
\end{example}
Theorem \ref{dlo-rec-thm} is a special case of a general result
characterizing the complexity of $\mathsf{V}(T)$ for all decidable
$\aleph_0$-categorical  theories $T$ in terms of an important
model-theoretic invariant associated to $T$.

\begin{theorem}[Engeler/Ryll-Nardzewski/Svenonius]
    A complete theory $T$ with infinite models is $\aleph_0$-cat\-e\-go\-ri\-cal if and only if for each $n$ there are finitely many $n$-types consistent with $T$.    
  \end{theorem}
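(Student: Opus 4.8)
The plan is to prove the Ryll-Nardzewski theorem, which characterizes $\aleph_0$-categoricity of a complete theory $T$ by the finiteness of the number of $n$-types for each $n$. The natural route is to establish a chain of equivalences connecting three notions: (i) $T$ is $\aleph_0$-categorical; (ii) for each $n$ there are only finitely many complete $n$-types consistent with $T$; and (iii) every complete $n$-type consistent with $T$ is principal (isolated), and indeed every model of $T$ is atomic. The central tool throughout is the \emph{back-and-forth} method, supported by the \emph{Omitting Types Theorem}.

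\medskip
\emph{Step 1: Finitely many types implies $\aleph_0$-categoricity.} Assume that for each $n$ there are only finitely many $n$-types. First I would observe that when there are finitely many $n$-types, each type is isolated: a finite set of types partitions the Stone space, and in a finite Hausdorff space every point is isolated. Consequently every type is realized in every model (isolated types cannot be omitted), so all countable models of $T$ are atomic. Then I would run a back-and-forth argument between any two countable models $\mathcal{A}$ and $\mathcal{B}$: enumerate their universes, and at each stage extend a finite partial elementary map by matching the realized type of the next element. Because there are only finitely many types over any finite tuple — the formula isolating the type of $\bar{a}b$ together with the type of $\bar{a}$ pins down a suitable $b'$ in $\mathcal{B}$ — the map extends, yielding an isomorphism. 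Hence $\mathcal{A}\cong\mathcal{B}$.

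\medskip
\emph{Step 2: Infinitely many types implies failure of $\aleph_0$-categoricity.} For the contrapositive, suppose that for some $n$ there are infinitely many complete $n$-types. The Stone space $S_n(T)$ is then an infinite compact space, so it has a non-isolated point, i.e.\ a non-principal $n$-type $p$. By the Omitting Types Theorem there is a countable model $\mathcal{A}$ of $T$ that omits $p$. On the other hand, every type consistent with $T$ is realized in some countable model, so by taking a suitable model (or a union/elementary-chain construction) I obtain a countable model $\mathcal{B}$ realizing $p$. Since $\mathcal{A}$ omits $p$ and $\mathcal{B}$ realizes it, $\mathcal{A}\not\cong\mathcal{B}$, so $T$ has at least two non-isomorphic countable models and is not $\aleph_0$-categorical.

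\medskip
The main obstacle is the back-and-forth construction in Step 1, where the finiteness hypothesis must be leveraged correctly. The key point is that finiteness of the number of $n$-types for every $n$ guarantees that each complete type over a finite tuple is isolated by a single formula, and this isolating formula is exactly what lets the extension step succeed: given a partial elementary map sending $\bar{a}\mapsto\bar{a}'$ and a new element $b$, I use the isolating formula for $\mathrm{tp}(\bar{a}b)$ to find a witness $b'$ in the target model realizing the image type. Care is needed to verify that isolation is preserved appropriately and that the Omitting Types Theorem applies — the latter requires the language to be countable, which holds here. The remaining steps are routine once the topological observation (finite Stone space $\Leftrightarrow$ all points isolated, infinite compact space $\Rightarrow$ a non-isolated point exists) is in hand.
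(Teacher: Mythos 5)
The paper states this theorem as classical background, attributed to Engeler, Ryll-Nardzewski, and Svenonius, and gives no proof of it at all, so there is no internal argument to compare yours against; what you have written is the standard textbook proof, and it is correct. Your Step 1 is right: finitely many $n$-types for each $n$ makes the Stone space finite, hence every type is isolated, isolated types are realized in every model of the complete theory, so all countable models are atomic, and back-and-forth shows any two countable atomic models are isomorphic. Your Step 2 is also right: an infinite compact Stone space has a non-isolated point, i.e.\ a non-principal type, which the Omitting Types Theorem (applicable since the language is countable) lets you omit in one countable model, while compactness plus downward L\"owenheim--Skolem yields another countable model realizing it. One place worth tightening is the extension step of the back-and-forth: the clean mechanism is to take the formula $\varphi(\bar{x},y)$ isolating the $(n+1)$-type $\mathrm{tp}(\bar{a}b)$, note that $\exists y\,\varphi(\bar{x},y)$ belongs to $\mathrm{tp}(\bar{a})=\mathrm{tp}(\bar{a}')$, pick a witness $b'$ in $\mathcal{B}$, and observe that any realization of $\varphi(\bar{a}',y)$ realizes the entire image type; your phrase ``pins down a suitable $b'$'' gestures at this but leaves the actual argument implicit. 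A pedantic edge case you silently pass over is a complete theory all of whose models are finite, but under any reasonable convention both sides of the equivalence hold there, so nothing breaks.
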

  \begin{definition}
    The \emph{Ryll-Nardzewski function} of a complete $\aleph_0$-categorical theory $T$ is the function $\RN_T:\mathbb{N}\rightarrow\mathbb{N}$ such that, for each $n$, $\RN_T(n)$ is the number of $n$-types consistent with $T$.
  \end{definition}
  \begin{theorem}
    If $T$ is a complete, decidable, $\aleph_0$-categorical theory, then $\mathsf{V}(T)\equiv_\textup{T} \RN_T$.
  \end{theorem}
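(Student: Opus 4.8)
The plan is to establish the two reductions $\RN_T\le_T\mathsf{V}(T)$ and $\mathsf{V}(T)\le_T\RN_T$ separately. Both rest on one consequence of $\aleph_0$-categoricity: by Ryll--Nardzewski, for each $n$ there are exactly $\RN_T(n)$ complete $n$-types and each is isolated, so the Lindenbaum algebra of $L$-formulas in the free variables $y_1,\dots,y_n$ modulo $T$-equivalence is a finite Boolean algebra whose atoms are precisely these types. Hence there are exactly $2^{\RN_T(n)}$ such formulas up to $T$-equivalence, and every class has a representative in which all of $y_1,\dots,y_n$ occur free, obtained by conjoining tautologies $\alpha_i\vee\neg\alpha_i$ with $\alpha_i$ an atomic $L$-formula mentioning $y_i$ (e.g.\ $y_i=y_i$ when $L$ has identity, or $R(y_i,\dots,y_i)$ for a relation symbol $R$ of $L$). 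I will also use the replacement fact that $T$-equivalent substitutions give $T$-equivalent instances: if $\psi_i\equiv_T\psi_i'$ for all $i$ then $T\vdash\varphi(\bar\psi)\leftrightarrow\varphi(\bar\psi')$, so $\varphi(\bar\psi)\in T$ iff $\varphi(\bar\psi')\in T$.

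For $\RN_T\le_T\mathsf{V}(T)$ I would generalize Example \ref{dlo2-ex}. For each $n,m$ let $\Phi_{n,m}$ be the $S$-sentence $\bigvee_{1\le i<j\le m+1}\forall y_1\cdots\forall y_n\,(X^n_i(y_1,\dots,y_n)\leftrightarrow X^n_j(y_1,\dots,y_n))$. The claim is that $\Phi_{n,m}\in\mathsf{V}(T)$ iff $2^{\RN_T(n)}\le m$. If there are at most $m$ equivalence classes, any $m+1$ substituted formulas contain two that are $T$-equivalent by pigeonhole, so some disjunct, hence the whole disjunction, lies in $T$. Conversely, if there are at least $m+1$ classes, choose $m+1$ pairwise inequivalent formulas with all variables free; for each pair the universal closure of the biconditional is not in $T$, so by completeness its negation is, and since $T$ is closed under conjunction the negation of $\Phi_{n,m}(\bar\psi)$ lies in $T$, whence (consistency) $\Phi_{n,m}(\bar\psi)\notin T$ and $\Phi_{n,m}\notin\mathsf{V}(T)$. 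Given the $\mathsf{V}(T)$ oracle I then compute $\RN_T(n)$ by searching for the least $m$ with $\Phi_{n,m}\in\mathsf{V}(T)$, which equals $2^{\RN_T(n)}$, and taking a logarithm; the search halts because the count is finite.

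For $\mathsf{V}(T)\le_T\RN_T$, given $\varphi$ and using that $T$ is decidable, for each arity $n$ occurring in $\varphi$ I enumerate $L$-formulas in $y_1,\dots,y_n$ and, testing $T$-equivalence by deciding whether $\forall\bar y(\psi\leftrightarrow\psi')\in T$, greedily collect pairwise inequivalent representatives until I have $2^{\RN_T(n)}$ of them; the oracle value $\RN_T(n)$ tells me when to stop, and by the counting fact this yields a complete set of class representatives in which all variables occur free. I then range over the finitely many tuples of representatives, substitute into $\varphi$, and decide membership of each instance in $T$; by the replacement fact, $\varphi\in\mathsf{V}(T)$ iff all of these finitely many instances lie in $T$.

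The main work is the lower bound $\RN_T\le_T\mathsf{V}(T)$: the key idea is that a single schematic sentence can force a pigeonhole collision among substituted formulas exactly when the number of $T$-inequivalent formulas falls to a prescribed threshold, so validities can \emph{count} equivalence classes and hence types. Completeness is what lets me pass from ``no disjunct is in $T$'' to ``the negation of the disjunction is in $T$''; $\aleph_0$-categoricity makes every relevant count finite, so both searches terminate; and decidability of $T$ is what makes the upper bound effective. The only delicate point is the ``all displayed variables free'' clause in Definition \ref{voft-def}, handled by the tautology-conjunction device above.
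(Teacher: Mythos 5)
Your proof is correct and takes essentially the same route as the paper's: the reduction $\RN_T\le_T\mathsf{V}(T)$ uses the same disjunction-of-biconditionals schema together with pigeonhole on $T$-equivalence classes of formulas, and the reduction $\mathsf{V}(T)\le_T\RN_T$ uses the oracle to terminate a search for a finite list of formulas exhausting all substitutions up to $T$-equivalence, after which finitely many instances are tested using decidability of $T$. The differences are minor parametrizations: you threshold at every $m$ rather than only at powers of $2$, and your search collects representatives of all $2^{\RN_T(n)}$ formula classes (for which pairwise inequivalence is exactly the right stopping test) whereas the paper searches for $\RN_T(n)$ type-isolating formulas (for which one actually needs pairwise incompatibility, not just the nonequivalence it states) --- so your variant is, if anything, slightly more careful on that point and on the free-variable requirement of Definition \ref{voft-def}.
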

  \begin{proof}
  Let $X^n_i$, $1\leq i\leq 2^k$ be predicate letters of arity $n$ and let $\overline{y} = (y_1,\ldots,y_{n})$. Let 
    $\varphi(X_1,\ldots,X_{2^k})$ be the formula
    \[
\bigvee_{1\leq i<j\leq 2^k}(\forall \overline{y})(X_i(\overline{y})\leftrightarrow X_j(\overline{y})).
\]
    Then, $\varphi\in \mathsf{V}(t)$
    if and only if $\RN_T(n)<k$.
Therefore $\RN_T\leq_\textup{T} \mathsf{V}(T)$.

Given an oracle for $\RN_T$ and a decidable theory $T$, consider a formula $\varphi(X_1,\ldots,X_k)$.
For each $X_i$ with arity $n_i$ and $d_i=\RN_T(n_i)$, enumerate lists $(\psi_1,\ldots,\psi_{d_i})$ of formulas of arity $n_i$ until we find a list of formulas, each consistent with $T$, and pairwise nonequivalent modulo $T$.  Such a list must be an enumeration of the $n_i$-types of $T$.
Then all formulas of arity $n_i$ are equivalent to disjunctions of the $\psi_i$.  We can test $\varphi(X_1,\ldots,X_k)$ with each of the finitely many possible combinations of  these disjunctions.  Therefore $\mathsf{V}(T)\leq_\textup{T} \RN_T$.
\end{proof}
It follows at once that If $T$ is complete, decidable, and $\aleph_0$-categorical, and $\RN_T$ is computable, then $\mathsf{V}(T)$ is computable.\footnote{Notice that the same argument shows that if $T$ is a complete decidable theory with a finite model, then $\mathsf{V}(T)$ is computable.}

The following result of \cite{Schmerl} now yields a complete characterization of the possible Turing degrees of $\mathsf{V}(T)$ for decidable $\aleph_0$-categorical theories $T$.
 \begin{theorem}[Schmerl]
     For any c.e.\ degree $\mathbf{a}\in[0,0']$, there are complete, decidable, $\aleph_0$-categorical theories $T$ such that $\RN_T$ is in $\mathbf{a}$.
   \end{theorem}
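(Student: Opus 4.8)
The plan is to fix a c.e.\ set $A$ of degree $\mathbf a$ together with a computable enumeration $A=\bigcup_s A_s$, and to construct a decidable $\aleph_0$-categorical theory $T$ with $\RN_T\equiv_T A$; Schmerl's statement then follows by letting $\mathbf a$ range over all c.e.\ degrees. The key preliminary observation is that for any decidable $T$ the relation $R_T:=\{(n,d):\RN_T(n)\ge d\}$ is c.e., since $\RN_T(n)\ge d$ holds exactly when there are $d$ pairwise $T$-inconsistent consistent $n$-formulas, a $\Sigma_1$ condition. Moreover, if the construction supplies a computable function $b$ with $b(n)\ge\RN_T(n)$, then $\RN_T(n)=\max\{d\le b(n):(n,d)\in R_T\}$, so $\RN_T\equiv_T R_T$. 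It therefore suffices to build a decidable $T$ for which this canonical c.e.\ relation $R_T$ satisfies $R_T\equiv_T A$ and for which a computable upper bound $b$ is available.

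To realize such a $T$, I would work in a relational language and take $T=\mathrm{Th}(\mathcal M)$ for the Fraïssé limit $\mathcal M$ of an amalgamation class $\mathcal K$ of finite structures, designed so that for each $n$ only finitely many isomorphism types of $n$-element structures occur in $\mathcal K$ (equivalently, on any $n$ points only finitely many of the relation symbols are non-trivial). By the Engeler/Ryll-Nardzewski/Svenonius theorem this makes $T$ $\aleph_0$-categorical, and the Fraïssé limit has quantifier elimination, so the $n$-types are exactly the finitely many realized quantifier-free $n$-types. The class $\mathcal K$ would consist of a fixed base, contributing a computable number $c_n$ of guaranteed $n$-types, together with small, pairwise symbol-disjoint gadgets: for each integer $m$ entering $A$, a gadget $G_m$ (of size growing with $m$) whose realizability in $\mathcal M$ contributes new $n$-types and which freely amalgamates with the base and with the other gadgets. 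Only the finitely many gadgets small enough to embed into an $n$-element set can be exhibited at level $n$, so only finitely many are ever visible there; this both keeps every $n$-type count finite, preserving $\aleph_0$-categoricity, and supplies the computable bound $b$. Arranging the gadgets so that the locations and sizes of the jumps of $\RN_T$ computably encode and are encoded by $A$ then gives $R_T\equiv_T A$.

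The crux---and the step I expect to be the main obstacle---is to keep $T$ decidable while still forcing $\RN_T$ to have the prescribed noncomputable degree, two requirements that pull against each other. The naive danger is this: if $G_m$ were produced uniformly computably from $m$, then the single sentence $\exists\bar y\,\delta_{G_m}(\bar y)$ asserting its realization would be computable in $m$ and its membership in $T$ would be equivalent to $m\in A$, forcing $A$ to be computable. I would defeat this by indexing each gadget's fresh relation symbols by the stage $s$ at which $m$ enters $A$ rather than by $m$ itself. Deciding a fixed sentence $\sigma$ then requires only running the enumeration until the finitely many stages named by the symbols occurring in $\sigma$ have been processed---a computable bound---after which the truth of $\sigma$ is settled and, since we only ever add new types using new symbols and never revise old commitments, never changes; hence $T$ is decidable. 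Recognizing that $\RN_T(n)$ exceeds the base, by contrast, amounts to an unbounded search over stages for an isolating formula witnessing an extra type, and is only a genuinely c.e.\ event with no computable witness function---which is exactly what allows $R_T$, and hence $\RN_T$, to attain the degree of $A$ instead of collapsing to a computable function.

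Finally I would discharge the model-theoretic bookkeeping: verify that $\mathcal K$ is hereditary and has the joint embedding and amalgamation properties (free amalgamation over the base, using that distinct gadgets involve disjoint symbols); check that membership in $\mathcal K$ is decidable, since testing whether a finite structure lies in $\mathcal K$ reduces to checking, for each gadget-symbol occurring in it, whether the named stage of the enumeration actually produced that gadget, which is computable; and then conclude by quantifier elimination that $T=\mathrm{Th}(\mathcal M)$ is complete and decidable with the intended type counts. Combining $\RN_T\equiv_T R_T$ with $R_T\equiv_T A$ yields $\RN_T\equiv_T A$, completing the construction.
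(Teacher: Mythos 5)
The paper itself does not prove this theorem --- it is quoted from \cite{Schmerl} --- so there is no internal proof to compare against, and your argument must stand on its own. Assessed on those terms, your construction is essentially sound, and its central insight is the right one: to reconcile decidability of $T$ with noncomputability of $\RN_T$, you decouple the \emph{name} of a gadget's relation symbol (indexed by the stage $s$ at which $m$ enters $A$, so that the activity of any named symbol is decidable by running the enumeration for $s$ steps) from the gadget's \emph{size} (tied to $m$ itself, so that $\RN_T(n)$ records exactly which $m$ with $g(m)\le n$ lie in $A$). A sentence can name only finitely many stage-indexed symbols, so $T$ stays decidable, while recovering $A$ from $\RN_T$ amounts to asking which arities are ever activated --- an unbounded search over stages that the type counts answer. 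Two details must be pinned down to turn the sketch into a proof. First, your claim that only gadgets small enough to embed into an $n$-element set are visible at level $n$ is not automatic: it forces each gadget $G_m$ to consist of a \emph{single relation of arity equal to its size} $g(m)$, holding only on tuples of distinct elements, so that every proper substructure of a gadget is trivial; if gadgets were built from low-arity relations (say, graphs on $g(m)$ vertices), each element of $A$ would already create new $1$- or $2$-types and $\aleph_0$-categoricity would fail. Second, since the arity of the symbol activated at stage $s$ cannot depend on the enumeration, the language must be fixed in advance to contain a $j$-ary symbol $R_{s,j}$ for \emph{every} pair $(s,j)$, with all but the activated ones interpreted as empty; this keeps the language computable and is what makes your membership test for $\mathcal{K}$ and your decision procedure well defined. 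With these choices the deferred bookkeeping goes through --- in particular, free amalgamation of symbol-disjoint gadgets guarantees that the reduct of the limit to the finitely many symbols occurring in a sentence is itself the Fra\"iss\'e limit of the reduct class, so truth really is settled at stage $\max s_i$ and never revised --- and the counting recursion yields $\RN_T\equiv_T A$ as you claim.
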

  \begin{corollary}\label{schmerl-cor}
    For any c.e.\ degree $\mathbf{a}\in[0,0']$, there are complete, decidable, $\aleph_0$-categorical theories $T$ such that $\mathsf{V}(T)$ is in $\mathbf{a}$.
  \end{corollary}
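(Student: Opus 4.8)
The plan is to obtain the corollary as a direct composition of Schmerl's theorem with the immediately preceding characterization that $\mathsf{V}(T)\equiv_T\RN_T$ for decidable, complete, $\aleph_0$-categorical theories. Since both ingredients are already in hand, the argument is essentially a chaining of two results, and the only real work is checking that the hypotheses of the second match the conclusion of the first.

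First I would fix an arbitrary c.e.\ degree $\mathbf{a}\in[0,0']$ and apply Schmerl's theorem to produce a computable, $\aleph_0$-categorical theory $T$ with $\RN_T\in\mathbf{a}$. At this stage I would note the terminological alignment: a \emph{computable} theory, meaning that the set of sentences $T$ is a computable set, is precisely what is called a \emph{decidable} theory in the statement of the equivalence $\mathsf{V}(T)\equiv_T\RN_T$. I would also observe that the theories furnished by Schmerl are complete; indeed, completeness is implicit in the very fact that $\RN_T$ is under discussion, since the Ryll-Nardzewski function is defined only for complete $\aleph_0$-categorical theories, as the count of $n$-types consistent with such a $T$. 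Hence $T$ meets all three hypotheses---decidable, complete, and $\aleph_0$-categorical---required by the preceding theorem.

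Second I would invoke that theorem to conclude $\mathsf{V}(T)\equiv_T\RN_T$. Because Turing equivalence preserves Turing degree and $\RN_T\in\mathbf{a}$, it follows that $\mathsf{V}(T)\in\mathbf{a}$ as well, which is exactly the assertion of the corollary. I do not anticipate any genuine obstacle here: all the mathematical content resides in Schmerl's construction and in the already-established two-way reduction $\mathsf{V}(T)\equiv_T\RN_T$. The single point deserving a sentence of care is the bookkeeping just described---verifying that ``computable'' and ``decidable'' coincide and that Schmerl's theories are complete---so that the two theorems can be composed without any gap.
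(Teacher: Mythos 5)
Your proposal is correct and is essentially the paper's own argument: the corollary is obtained by composing Schmerl's theorem with the preceding equivalence $\mathsf{V}(T)\equiv_T \RN_T$, exactly as you describe. The bookkeeping points you raise (computable $=$ decidable for deductively closed theories, and completeness of Schmerl's theories) are the right things to check and pose no obstacle.
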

  Note that Corollary \ref{schmerl-cor} is optimal, since $\mathsf{V}(T)$ must have c.e.\ degree, by Theorem \ref{dec-val-pi1-thm}.
  
  As a corollary to the foregoing proof that $\RN_T\leq_\textup{T} \mathsf{V}(T)$, we have
   \begin{corollary}\label{vrn-cor}
    If $T$ and $U$ are complete $\aleph_0$-categorical theories, then
  \[
  \text{if }\mathsf{V}(T)=\mathsf{V}(U),\text{then } \RN_T=\RN_U.
  \]
\end{corollary}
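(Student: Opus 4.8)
The plan is to exploit the family of test sentences already constructed in the proof that $\RN_T\leq_T\mathsf{V}(T)$. For each pair $(n,k)$, let $\varphi_{n,k}$ denote the $S$-sentence
\[
\bigvee_{1\leq i<j\leq 2^k}(\forall \overline{y})(X_i(\overline{y})\leftrightarrow X_j(\overline{y})),
\]
where $X_1,\ldots,X_{2^k}$ are predicate letters of arity $n$ and $\overline{y}=(y_1,\ldots,y_n)$. The crucial observation is that $\varphi_{n,k}$ is a single, fixed sentence of $S$ that makes no reference to any particular theory, while the foregoing proof shows that for every complete $\aleph_0$-categorical theory $T$ we have $\varphi_{n,k}\in\mathsf{V}(T)$ if and only if $\RN_T(n)<k$. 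Thus the same syntactic object serves as a uniform probe of the Ryll-Nardzewski function of any such theory, and this uniformity is exactly what makes the two functions comparable.

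First I would fix $n$ and run the probe across all $k$. Since by hypothesis $\mathsf{V}(T)=\mathsf{V}(U)$, for every $k$ the sentence $\varphi_{n,k}$ lies in $\mathsf{V}(T)$ exactly when it lies in $\mathsf{V}(U)$. Combining this with the equivalence recalled above yields, for every $k$,
\[
\RN_T(n)<k\iff\RN_U(n)<k.
\]

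Finally I would recover the values of the functions from these threshold statements. For a natural number $m$, the set $\{k:m<k\}$ determines $m$, since its least element is $m+1$. Applying this with $m=\RN_T(n)$ and $m=\RN_U(n)$, the displayed equivalence says that the two sets $\{k:\RN_T(n)<k\}$ and $\{k:\RN_U(n)<k\}$ coincide, so their least elements agree and hence $\RN_T(n)=\RN_U(n)$. As $n$ was arbitrary, $\RN_T=\RN_U$. I do not anticipate a genuine obstacle here, since the entire content lies in the uniformity of $\varphi_{n,k}$, which lets the hypothesis $\mathsf{V}(T)=\mathsf{V}(U)$ be read off pointwise against the two Ryll-Nardzewski functions. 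The only point requiring care is to confirm that $\varphi_{n,k}$ depends only on $n$ and $k$, and not on the ambient theory, so that it is literally the same element whose membership we test in $\mathsf{V}(T)$ and in $\mathsf{V}(U)$; note in particular that decidability plays no role, as the equivalence $\varphi_{n,k}\in\mathsf{V}(T)\iff\RN_T(n)<k$ is a purely model-theoretic fact about complete $\aleph_0$-categorical theories.
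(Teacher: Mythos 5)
Your proposal is correct and is essentially the paper's own argument: the paper derives this corollary directly from the proof of $\RN_T\leq_T \mathsf{V}(T)$, i.e., from the fact that the fixed sentences $\varphi_{n,k}$ satisfy $\varphi_{n,k}\in\mathsf{V}(T)\iff\RN_T(n)<k$, which is exactly the probe you use. Your added remarks --- recovering $\RN_T(n)$ as the least $k$ minus one in the threshold set, and noting that decidability is irrelevant to this equivalence --- simply make explicit what the paper leaves implicit.
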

The following example, due to Alex Kruckman, shows that the converse to Corollary \ref{vrn-cor} fails.\footnote{Alex Kruckman, private communication.}
\begin{example}
There are complete $\aleph_0$-categorical theories $T$ and $U$ such that
\[
\RN_T=\RN_U,\text{ but }\mathrm{V}(T)\neq\mathrm{V}(U).
\]
For example, let $T$ be the theory of the countable random graph, and let $U$ be the theory of the countable random tournament.
Since both theories admit elimination of quantifiers, $k$-types are determined, in the case of $T$, by the presence or absence of edges, and in the case of $U$, by the orientation of edges. Hence, $\RN_T=\RN_U$.
On the other hand, for all distinct $a,b,c,d\in |U|$,
\[
\mathsf{type}_U(a,b)=\mathsf{type}_U(c,d)\text{ or } \mathsf{type}_U(a,b)=\mathsf{type}_U(d,c),
\]
which generates a formula in $\mathrm{V}(U)$ that is absent from $\mathrm{V}(T)$.
\end{example}

\subsection{Non-$\aleph_0$-categorical theories}\label{stab-sec}

\begin{theorem}\label{dec-non-a0cat-thm}
If $T$ is a complete, decidable, non-$\aleph_0$-categorical theory with infinite models, then $\mathsf{V}(T)$ is $\Pi_1$-complete.
\end{theorem}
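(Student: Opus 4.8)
Since Theorem \ref{dec-val-pi1-thm} already gives that $\mathsf{V}(T)$ is $\Pi_1$, the plan is to prove $\Pi_1$-\emph{hardness}, equivalently that the complement $\overline{\mathsf{V}(T)}=\{\varphi:\exists s\ \varphi[s]\notin T\}$ is $\Sigma_1$-hard. Note this complement is manifestly $\Sigma_1$: an instance $\varphi[s]$ is computable from $\varphi$ and a substitution $s$, and $T$ is decidable, so one enumerates substitutions and halts when some instance falls outside $T$. For hardness I would reduce from \emph{finite satisfiability}, which is $\Sigma_1$-complete by Trakhtenbrot's theorem.

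The raw material is supplied by non-$\aleph_0$-categoricity. By the Engeler/Ryll--Nardzewski/Svenonius theorem there is an $n$ for which $S_n(T)$ is infinite, so the Lindenbaum algebra $B_n(T)$ of $L$-formulas in $\bar y=(y_1,\dots,y_n)$ modulo $T$ is infinite. Every infinite Boolean algebra contains an infinite family of pairwise disjoint nonzero elements, and since $T$ is decidable (so $T$-consistency and $T$-disjointness of formulas are decidable), I can effectively compute $L$-formulas $\chi_0(\bar y),\chi_1(\bar y),\dots$ that are each $T$-consistent and pairwise $T$-inconsistent; in every $M\models T$ these define disjoint nonempty ``cells.'' Using these I set up an $n$-dimensional interpretation of finite relational structures into models of $T$: a schematic domain predicate, an equality predicate filled by the cell-equivalence $\bigvee_{i<k}(\chi_i(\bar y)\wedge\chi_i(\bar y'))$, and schematic relation predicates filled by disjunctions $\bigvee_{(i,j)}(\chi_i(\bar y)\wedge\chi_j(\bar y'))$. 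This realizes, for each $k$, an arbitrary $\tau$-structure on exactly $k$ points as a definable quotient in $M$.

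Given a $\tau$-sentence $\sigma$, let $(\neg\sigma)^\circ$ be the $S$-sentence obtained by relativizing $\neg\sigma$ to the domain predicate and translating its atoms through this interpretation. By completeness of $T$, for any substitution $s$ we have $(\neg\sigma)^\circ[s]\notin T$ iff the coded structure $\mathcal{A}_s$ satisfies $\sigma$; hence $(\neg\sigma)^\circ\in\overline{\mathsf{V}(T)}$ iff some definable interpretation in a model of $T$ satisfies $\sigma$. If $\sigma$ has a finite model, the construction of the previous paragraph codes it into finitely many cells and furnishes exactly such a witnessing substitution, which gives the forward direction of the reduction.

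The main obstacle is the converse, which I expect to be the heart of the argument: I must guarantee that a witnessing substitution forces $\sigma$ to have a \emph{finite} model, ruling out spurious satisfaction by an \emph{infinite} definable interpretation, since finiteness is not first-order and nonstandard interpretations could otherwise create false positives. The plan to defeat this is to keep the coded domain finite by construction rather than by an unexpressible finiteness axiom: I would arrange $(\neg\sigma)^\circ$ so that the only substitutions able to push an instance out of $T$ are those whose domain predicate carves out a finite union of cells, with every other substitution yielding an instance in $T$ for trivial (relativization-failure) reasons. Equivalently, and perhaps more robustly, one can reduce instead from the existence of a finite \emph{proof} or halting \emph{computation history}, whose validity is a local, decidable condition read directly off finitely many cells and is therefore insensitive to the infinite part of any interpretation. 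Once the witnessing structures are pinned to genuine finite objects, satisfiability of the coded sentences coincides with finite satisfiability, so $\overline{\mathsf{V}(T)}$ is $\Sigma_1$-hard and $\mathsf{V}(T)$ is $\Pi_1$-complete.
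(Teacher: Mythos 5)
There is a genuine gap, and you have put your finger on exactly where it lies: the converse direction of your reduction, i.e., ruling out satisfaction of the coded sentence by an \emph{infinite} definable interpretation. Neither of your proposed repairs works. The first one --- arranging that ``the only substitutions able to push an instance out of $T$ are those whose domain predicate carves out a finite union of cells'' --- is not enforceable: the substituted formulas are arbitrary $L$-formulas, not Boolean combinations of your $\chi_i$, and finiteness of the set they carve out is not expressible by any first-order side condition, so there is no relativization trick that makes every ``infinite-domain'' substitution fail for trivial reasons. The second repair --- reducing from halting computation histories on the grounds that their correctness is ``a local, decidable condition \ldots\ insensitive to the infinite part of any interpretation'' --- is false for precisely the phenomenon you are trying to evade: a pseudo-computation of nonstandard (infinite) order type can start with the correct initial configuration, obey every local transition rule, and still reach a halting state even when the machine diverges. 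Hence consistency with $T$ of an instance of the coded ``$M$ halts on $n$'' sentence does not certify halting, and no many-one reduction from a $\Sigma_1$-complete set is obtained this way.

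The paper's proof also encodes Turing-machine computation histories, using, as you do, infinitely many pairwise $T$-incompatible consistent formulas on $k$-tuples supplied by non-$\aleph_0$-categoricity; but it resolves the nonstandard-computation problem by abandoning the attempt at a many-one reduction and arguing degree-theoretically. Fix a machine $M$ whose output-$0$ and output-$1$ sets $Z^0,Z^1$ are effectively inseparable, and let $\varphi^l_n$ code ``$M$ on input $n$ halts with output $l$.'' Two one-sided facts are established: (i) if $M$ really halts on $n$ with output $l$, then $\varphi^l_n$ has an instance consistent with $T$, so $\neg\varphi^l_n\notin\mathsf{V}(T)$; and (ii) if $\neg\varphi^l_n\notin\mathsf{V}(T)$, then $M$ does \emph{not} halt on $n$ with output $1-l$, because by determinism a genuine halting computation would appear as a finite initial segment of the (possibly nonstandard) pseudo-computation witnessing consistency, forcing the two outputs to agree. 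Thus nonstandard computations can create false positives for halting, but they can never contradict a genuine output, and this weaker information already lets $\mathsf{V}(T)$ compute a total $0$-$1$-valued function extending the partial function of $M$, i.e., a set separating $Z^0$ and $Z^1$. Consequently $\mathsf{V}(T)$ has PA degree; since it is $\Pi_1$ (Theorem \ref{dec-val-pi1-thm}) it has c.e.\ degree, and the only c.e.\ PA degree is $\mathbf{0}'$. Note that what this yields is completeness in the sense of Turing degree, whereas your plan aims at $\Sigma_1$-hardness of the complement under many-one reductions --- something this route does not (and plausibly cannot) deliver. To complete your argument along the paper's lines you would need both the determinism observation and the effectively-inseparable-pair/PA-degree framework that converts the one-sided implications into completeness.
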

\begin{proof}
Let $T$ be a complete, decidable, non-$\aleph_0$-categorical theory with infinite models. We will show that 
\begin{claim}\label{main-eq}
there is a pair of effectively inseparable c.e.\ sets $Z^0$ and $Z^1$
 and a set $Y$ such that 
 \begin{enumerate}
 \item
 $Y$ separates $Z^0$ and $Z^1$, and
 \item
$Y$ is computable in  $\mathsf{V}(T)$.
\end{enumerate}
\end{claim}
It follows from Claim (\ref{main-eq}) that  $\mathsf{V}(T)$ has PA degree, and from Theorem \ref{dec-val-pi1-thm}  that it has c.e.\ degree. Since the only c.e.\ PA degree is $0^\prime$, we conclude that  $\mathsf{V}(T)$ is $\Pi_1$-complete.\footnote{The reader is advised to consult \cite[\S2.21]{DowneyHirschfeldt} for background on PA degrees.}
We proceed to establish Claim (\ref{main-eq}).

We consider Turing machines with the unary tape alphabet $\{1, B\}$. We think of such a machine as computing 0-1-valued partial functions on $\omega$ via the convention that input $n$ is represented by a tape initialized with 1's on its  $n+1$ left-most squares and otherwise blank, and with output 0, if it halts on a blank square, and with output 1, if it halts on a square inscribed with a 1. Let $M$ be such a Turing machine with the property that the sets 
\begin{align}
Z^0 &= \{n\mid M\mbox{ outputs 0 with input $n$}\}, \mbox{ and}
\\
Z^1 &= \{n\mid M\mbox{ outputs 1 with input $n$}\},
\end{align}
are effectively inseparable. 
In order to establish Claim \ref{main-eq}, it suffices to show that there is a total 0-1-valued function $f$ computable in  $\mathsf{V}(T)$ that extends the partial function computed by $M$. Toward this end, we specify formulas $\varphi^l_n$, $n\in \omega$, $l\in\{0,1\}$ that describe the behavior of $M$ with input $n$. We suppose that $M$ has $m$ states labeled $1,\dots,m$.

Since $T$ is not $\aleph_0$-categorical, we may choose a $k$ so that there are infinitely many $k$-types. 
For all $n\in \omega$, $l\in\{0,1\}$, the vocabulary of $\varphi^l_n$ consists of the following predicates:
\begin{itemize}
\item $X_{\mathrm{step}}, X_{\mathrm{start}},X_{\mathrm{end}},X_{\mathrm{tape}}, X_{\mathrm{left}},$ and  $X_{\mathrm{state}=i}, i\in\{1,\ldots,m\}$ are $k$-ary predicates, and
\item $X_\leq, X_{\preceq},X_0,X_1,
X_{\mathrm{head}}$ 
are all $2k$-ary predicates.
\end{itemize}
The sentences $\varphi_n^l$ will state that the predicates encode the execution of the Turing machine $M$ with input $n$ and output $l$, in accord with the foregoing input-output conventions.  
$X_{\mathrm{step}}$ will be a predicate satisfied by those elements that encode a time step in the running of $M$.  $X_{\leq}$ will be the subset of $X_{\mathrm{step}}^2$ encoding when one step in time happens before another.  In general we cannot expect that a time step is represented by a single element, so we will only require that $X_{\leq}$ be a preorder on $X_{\mathrm{step}}$.  $X_{\mathrm{start}}$ and $X_{\mathrm{end}}$ encode the first and last time steps, respectively. 
Similarly, $X_{\mathrm{tape}}$ will be a predicate satisfied by those elements that encode a position on the tape.  $X_{\preceq}$ will encode a preorder on $X_{\mathrm{tape}}$ and $X_{\mathrm{left}}$ will be the leftmost position.

$X_0$ and
$X_1$ 
partition $X_{\mathrm{step}}\times X_{\mathrm{tape}}$ encoding which positions on the tape are blank ($X_0$) or hold a $1$ ($X_1$)
at a given time step, and $X_{\mathrm{head}}$ will be a subset of $X_{\mathrm{step}}\times X_{\mathrm{tape}}$ encoding where the head is at a given step.  

Finally, the sets $X_{\mathrm{state}=1},\ldots,X_{\mathrm{state}=m}$ will be subsets of $X_{\mathrm{step}}$ indicating what state the machine is in at a given step.

Formally, $\varphi^l_n$ is a conjunction of clauses stating:
\begin{itemize}
\item $X_\leq$ is a discrete preorder on $X_{\mathrm{step}}$ with $X_{\mathrm{start}}$ and $X_{\mathrm{end}}$ as the first and last equivalence classes, respectively;
\item $X_\preceq$ is a discrete preorder on $X_{\mathrm{tape}}$ with $X_{\mathrm{left}}$ as the first equivalence class;
\item $X_0$ and $X_1$ 
partition $X_{\mathrm{step}}\times X_{\mathrm{tape}}$ respecting $X_\leq$ and $X_\preceq$ equivalence classes; 
\item a clause stating that in the initial configuration the leftmost $n+1$ tape squares hold a $1$ and the remainder of the tape is blank;
\item $X_{\mathrm{head}}$ is a subset of $X_{\mathrm{step}}\times X_{\mathrm{tape}}$ that respects $X_\leq$ and $X_\preceq$ equivalence classes. 
$X_{\mathrm{head}}$ represents the graph of a function from $X_{\mathrm{step}}$ equivalence classes to $X_{\mathrm{tape}}$ equivalence classes, and contains  $X_{\mathrm{start}}\times X_{\mathrm{left}}$;
\item $X_{\mathrm{state}=1}$ through $X_{\mathrm{state}=m}$ form a partition of $X_{\mathrm{step}}$ respecting $X_\leq$ equivalence classes;
\item letting $U$ be the set of terminal states, $X_{\mathrm{end}}=\bigcup_{i\in U}X_{\mathrm{state}=i}$;
\item the $X_{\mathrm{head}}$ image of $X_{\mathrm{end}}$ is contained in $X_l$;
\item for each rule of the Turing machine saying ``when in state $i$ with $a$ under the head, change the head to $b$, change to state $j$, and move in a particular direction'', the implication corresponding to this rule.
\end{itemize}

We proceed to show that there is a total 0-1-valued function $f$ computable in  $\mathsf{V}(T)$ that extends the partial function computed by $M$.
First, suppose that $M$ halts on input $n$ with output $l\in\{0,1\}$. Then, for some $d>n$, $M$ halts with input $n$ in at most $d$ many steps with output $l$. We need at most $2d$ many elements to encode this computation as above: $d$ of them to represent the steps of the computation, and $d$ of them to represent the tape positions the head might occupy during the computation. By our choice of $k$,  we can specify a list $\psi_1,\ldots,\psi_{2d}$ of formulas on $k$-tuples, that are consistent with $T$ and pairwise incompatible over $T$.   We can then construct a substitution instance $\Phi^l_n$ of the formula $\varphi^l_n$ by substituting  all the predicate variables with suitable boolean combinations of these formulas so that $T\cup\{\Phi^l_n\}$ is satisfiable. It follows at once that for $l\in\{0,1\}$ and $n\in\omega$,
\begin{equation}\label{eq1}
\text{if $\neg\varphi^l_n\in \mathsf{V}(T)$, then $M$ does not halt on input $n$ with output $l$}.
\end{equation}
Moreover, for $l\in\{0,1\}$ and $n\in\omega$,
\begin{equation}\label{eq2}
\text{if $\neg\varphi^l_n\not\in \mathsf{V}(T)$, then $M$ does not halt on input $n$ with output $1-l$}.
\end{equation}
For suppose $\neg\varphi^l_n\not\in \mathsf{V}(T)$. It follows that $\varphi^l_n$ is satisfiable, and hence that there is a computation $C$ on input $n$ in which $M$ enters an end state scanning $l$.\footnote{\label{Cfoot}Note that $C$ may be a ``non-standard computation,'' that is, a linearly ordered sequence of steps which behaves like a computation, but need not have finite length.}  But then if $M$ halted on input $n$ with output $1-l$, $C$ would contain a finite initial segment in which $M$ enters an end state scanning $1-l$. Thus, it follows that $M$ does not halt on input $n$ with output $1-l$.

It is evident from (\ref{eq1}) and (\ref{eq2}) that the following instructions specify a total 0-1-valued function $f$ computable in  $\mathsf{V}(T)$ that extends the partial function computed by $M$.
Given input $n$, output $l$ for the least $l\in\{0,1\}$ such that $\neg\varphi^l_n\not\in \mathsf{V}(T)$; if there is no such $l$, output $0$. 
\end{proof}

\section{Validities of all decidable theories
}\label{all-dec-subsec}

In this section we follow \cite{Vau60} and study the validities common to all decidable theories. We begin with the following definition from \cite{Vau60}.\footnote{\cite{Vau60} remarks that $\mathsf{V_{dec}}$ was considered by Tarski, though he gives no citation, and by \cite{cobham}.
}
  \begin{definition}
    $\mathsf{V_{dec}}=\bigcap_T\mathsf{V}(T)$ where $T$ ranges over all decidable theories.
    
  \end{definition}

The next two results concerning the complexity of  $\mathsf{V_{dec}}$  are due to \cite{cobham} and \cite{Han65}, respectively.
 \begin{theorem}[Cobham]\label{cobham-thm}
    $\mathsf{V_{dec}}$ is $\Pi_3$.
  \end{theorem}
  \begin{theorem}[Hanf]\label{hanf-thm}
    $\mathsf{V_{dec}}$ is not $\Pi_1$.
  \end{theorem}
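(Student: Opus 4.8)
The plan is to show that $\mathsf{V_{dec}}$ is $\Sigma_1$-hard by exhibiting a many-one reduction of the halting problem $K$ to $\mathsf{V_{dec}}$. Since $\Pi_1$ is closed under $\le_m$ and $K$ is $\Sigma_1$-complete (hence not $\Pi_1$, else decidable), this forces $\mathsf{V_{dec}}$ to be non-$\Pi_1$. I would reuse the Turing-machine coding of the proof of Theorem~\ref{dec-non-a0cat-thm}. Fix a machine $M$ that on input $n$ simulates the $n$-th machine on input $n$ and outputs $0$ if that computation halts, and otherwise diverges; thus ``$M$ halts on $n$'' means ``$M$ halts with output $0$'', which holds exactly on $K$, and $M$ never outputs $1$. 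Let $\varphi^1_n$ be the schema asserting the existence of a discrete-preordered computation of $M$ on $n$ whose terminal stage scans $1$, and let the reduction be $n\mapsto\neg\varphi^1_n$. The claim to be proved is
\[
\neg\varphi^1_n \in \mathsf{V_{dec}} \iff M \text{ halts on } n.
\]

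The easy direction is that halting implies membership. If $M$ halts on $n$ it halts with output $0$, and then the determinism argument underlying (\ref{eq2}) shows, as a matter of pure logic, that $\varphi^1_n$ is unsatisfiable: in any putative model the standard initial segment of the computation is forced by the input clause and the transition rules to agree with the actual run of $M$, which reaches its unique terminal stage scanning $0$, contradicting the clause requiring the terminal stage to scan $1$. Hence $\neg\varphi^1_n$ is valid, so $\neg\varphi^1_n\in\mathsf{V}\subseteq\mathsf{V_{dec}}$.

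For the converse I must show that if $M$ does not halt on $n$ then $\neg\varphi^1_n\notin\mathsf{V_{dec}}$, that is, there is a \emph{decidable} theory $T$ and a substitution instance with $\varphi^1_n(\overline{\psi})$ consistent with $T$. First, $\varphi^1_n$ is satisfiable: take the step-order to be $\omega+\omega^*$, let the initial $\omega$-block carry the (never-terminal) run of $M$ on $n$, and let the final $\omega^*$-block carry an independent, locally rule-respecting pattern whose maximum stage is terminal and scans $1$; the order-gap between the two blocks means no transition rule links them, so every clause of $\varphi^1_n$ holds. \textbf{Here lies the main obstacle.} One cannot simply take $T$ to be the complete theory of this model: discreteness makes each standard stage first-order definable from the start stage, so the theory of the model computes the run of $M$ and even decides $\Pi_2$-facts about it, hence is undecidable for this $M$. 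Thus the witnessing $T$ must be a carefully chosen \emph{incomplete} decidable theory, consistent with $\varphi^1_n(\overline{\psi})$ but not deciding the global behaviour of the embedded run; and it cannot be $\aleph_0$-categorical, since $\varphi^1_n$ forces a definable infinite discrete order and hence infinitely many $2$-types.

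To build such a $T$ I would exploit that a single tape computation is a one-dimensional dynamical object rather than a copy of arithmetic. The plan is to present $T$ by a decidable set of axioms over the relation symbols directly (taking the $\psi$'s atomic), consisting of the preorder/discreteness clauses, the partition and functionality clauses, the local transition rules, the bounded input clause, and the existential terminal-output clause, and then to prove $T$ decidable by reducing provability to a decidable normal form via an effective quantifier elimination on the linear skeleton, keeping the nonstandard region generic enough that no global $\Pi_2$-statement about the run is forced. The crux of the entire theorem is exactly this separation of ``enough structure to realize the computation in some model'' from ``enough structure to decide the computation in the theory,'' and the first-order tameness available for linear (rather than planar/arithmetical) configurations is what I expect to make it go through. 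Granting the construction, the displayed equivalence yields $K\le_m\mathsf{V_{dec}}$, and since $K$ is $\Sigma_1$-complete while $\Pi_1$ is closed under $\le_m$, the theorem follows.
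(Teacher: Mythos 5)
Your reduction $n\mapsto\neg\varphi^1_n$ is only half proved, and the missing half is the entire content of the theorem. The easy direction is fine: if $M$ halts on $n$ (necessarily with output $0$), the argument underlying (\ref{eq2}) shows $\varphi^1_n$ is unsatisfiable, so $\neg\varphi^1_n\in\mathsf{V}\subseteq\mathsf{V_{dec}}$. But for the converse you must produce, for each non-halting $n$, a \emph{decidable} theory $T$ and a substitution instance $\varphi^1_n(\overline{\psi})$ consistent with $T$, and here you offer only a plan (``effective quantifier elimination on the linear skeleton,'' a ``generic'' nonstandard region) rather than a proof. This step cannot be waved through, because the analogous claim is \emph{false} for finitely axiomatizable machine-encoding sentences in general: by the very Hanf/Peretyat'kin phenomenon the paper invokes (Theorem \ref{hanf2-thm}), there are satisfiable sentences $\theta$ that are consistent with \emph{no} decidable theory in the relevant sense, i.e., $\neg\theta^*\in\mathsf{V_{dec}}$ despite $\theta$ having models (indeed there are finitely axiomatizable essentially undecidable theories, e.g.\ relational versions of Robinson's $Q$). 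So satisfiability of $\varphi^1_n$ --- which is all your $\omega+\omega^*$ model gives --- does not yield $\neg\varphi^1_n\notin\mathsf{V_{dec}}$; you would have to show that your specific space-time-diagram sentence, for a diverging computation, generates a decidable consequence set (or admits some decidable consistent companion), and nothing in the paper supports this: the consistency argument of (\ref{eq1}) in Theorem \ref{dec-non-a0cat-thm} works only for \emph{halting} computations, encoded with finitely many pairwise-incompatible formulas. In effect your sketch asks for a hand-made special case of the Hanf/Peretyat'kin construction, which is a major theorem (in Peretyat'kin's full form, a book-length one), not a routine QE exercise.

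Two smaller points. First, even the satisfiability claim has an unaddressed wrinkle: your $\omega^*$-block needs a backward-infinite chain of pairwise non-terminal configurations ending in a terminal configuration scanning $1$, and for your particular $M$ (which never outputs $1$) such a chain need not exist; this is fixable by adding dummy states and rules to $M$, but it is not automatic. Second, note how the paper itself proceeds: it does not prove Hanf's theorem directly but cites it, and its own stronger result (Theorem \ref{vidpi3-thm}, $\Pi_3$-completeness) obtains non-$\Pi_1$-ness by routing the hard direction --- ``this satisfiable sentence \emph{does} have a decidable completion'' --- through Theorem \ref{hanf2-thm} and Lemma \ref{vidpi3-lem}, where decidable completions are supplied by computable paths of $\Pi_1$ classes via Peretyat'kin's construction. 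Your instinct that one can shortcut that machinery with a direct reduction from the halting problem founders exactly on the step that machinery exists to handle.
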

  
  \cite{Vau60} framed the problem of classifying $\mathsf{V_{dec}}$  
  in the arithmetical hierarchy; 
  indeed, Theorem \ref{hanf-thm} was the outcome of Hanf's successful effort to resolve Vaught's specific question whether $\mathsf{V_{dec}}$ is co-c.e.

  The next theorem resolves  
  Vaught's problem.
   \begin{theorem}\label{vidpi3-thm}
      $\mathsf{V_{dec}}$ is $\Pi_3$-complete.
  \end{theorem}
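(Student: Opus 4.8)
The upper bound is exactly Cobham's Theorem~\ref{cobham-thm}, so the whole content is the matching lower bound, that $\mathsf{V_{dec}}$ is $\Pi_3$-hard. The plan is to produce a computable map $n\mapsto\varphi_n$ from $\omega$ into the $S$-sentences with $\varphi_n\in\mathsf{V_{dec}}$ iff $n\in A$, where $A=\{n:\forall x\,\exists y\,\forall z\,R(n,x,y,z)\}$ is a fixed $\Pi_3$-complete set with $R$ computable. The design is dictated by the quantifier shape of $\mathsf{V_{dec}}=\bigcap_T\mathsf{V}(T)$: the outer ``$\forall$ decidable $T$'' will absorb the leading universal quantifier $\forall x$; the clause ``$T$ is a genuinely decidable theory'', when $T$ is presented by a candidate decision procedure, is a $\Pi_2$ condition (totality of the procedure together with deductive closure), and this is where the inner $\exists y\,\forall z$ gets encoded; and for a bona fide decidable $T$ the test $\varphi_n(\vec\psi)\in T$ is decidable, contributing no further complexity. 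Negating, $\varphi_n\notin\mathsf{V_{dec}}$ iff some decidable theory falsifies $\varphi_n$, and I will arrange this to hold iff $\exists x\,P(n,x)$, where $P(n,x):=\forall y\,\exists z\,\neg R(n,x,y,z)$ is the $\Pi_2$ predicate whose holding for some $x$ witnesses $n\notin A$.

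First I would build, uniformly in $n$ and $x$, a candidate theory $T_{n,x}$ in a fixed language $L$, presented by a procedure $\Delta_{n,x}$, so that $T_{n,x}$ is a decidable theory if and only if $P(n,x)$ holds. To decide the status of its $y$-th coding sentence, $\Delta_{n,x}$ searches for a $z$ with $\neg R(n,x,y,z)$; if such a $z$ is found for every $y$ then $\Delta_{n,x}$ is total and $T_{n,x}$ is decidable, whereas if for some $y$ no such $z$ exists I would, in that branch, code a fixed noncomputable (say $\Sigma_1$-complete) set into $T_{n,x}$, so that the resulting deductively closed set of sentences is not decidable by any procedure. Thus decidability of $T_{n,x}$ is exactly the $\Pi_2$ event $P(n,x)$.

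Next I would connect these theories to a single schema $\varphi_n$ by reusing, and elaborating, the computation-encoding from the proof of Theorem~\ref{dec-non-a0cat-thm}: there a schema was made to ``describe'' a (possibly nonstandard) run of a Turing machine, with the satisfiability of an instance over a theory tied to the machine's behavior. Here $\neg\varphi_n$ is to be the schematic assertion that the substituted predicates encode, for some $x$, a completed object certifying that $T_{n,x}$ has been realized as a decidable theory, and the substitution witnessing $\varphi_n(\vec\psi)\notin T_{n,x}$ is read off from the decidable structure underlying $T_{n,x}$ exactly as a consistent substitution instance is produced in the proof of Theorem~\ref{dec-non-a0cat-thm}. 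With this in hand, $T_{n,x}$ falsifies $\varphi_n$ whenever $P(n,x)$ holds, giving $\exists x\,P(n,x)\Rightarrow\varphi_n\notin\mathsf{V_{dec}}$, i.e. $n\notin A\Rightarrow\varphi_n\notin\mathsf{V_{dec}}$.

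The main obstacle is the converse soundness direction: assuming $n\in A$, I must rule out \emph{every} decidable theory as a falsifier of $\varphi_n$, not merely the designed $T_{n,x}$. This forces $\varphi_n$ to pin down its falsifying models tightly, so that any decidable $T$ with $\varphi_n\notin\mathsf{V}(T)$ must, through a witnessing substitution, actually realize a genuine (possibly nonstandard but internally coherent) certifying object of the kind above for some value $x$; the standard/nonstandard subtlety appearing in the proof of Theorem~\ref{dec-non-a0cat-thm} reappears and must be controlled so that mere consistency of the instance already yields, by a finite initial-segment argument, an honest witness. Pushing this through, a decidable falsifier supplies a decidable $T_{n,x}$, hence $P(n,x)$ for some $x$, contradicting $n\in A$. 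Combining the two directions gives $\varphi_n\in\mathsf{V_{dec}}\iff n\in A$; since $A$ is $\Pi_3$-complete and $\mathsf{V_{dec}}$ is $\Pi_3$ by Theorem~\ref{cobham-thm}, $\mathsf{V_{dec}}$ is $\Pi_3$-complete.
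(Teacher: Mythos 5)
Your quantifier accounting is right, and you have correctly isolated the content of the theorem: the upper bound is Theorem~\ref{cobham-thm}, and everything rests on exhibiting a computable map $n\mapsto\varphi_n$ reducing a $\Pi_3$-complete set to $\mathsf{V_{dec}}$. But there is a genuine gap at the heart of your lower bound, in the soundness direction, and it is exactly the point where the paper instead invokes two heavy external theorems. A first-order schema cannot ``assert that the substituted predicates encode a completed object certifying that $T_{n,x}$ has been realized as a decidable theory'': decidability is not a property of models or of substitution instances, and what falsifiability of $\varphi_n$ over decidable theories actually amounts to is the following (this is the content of the completion/pullback argument, cf.\ Lemma~\ref{vidpi3-lem}): $\varphi_n\notin\mathsf{V_{dec}}$ if and only if the \emph{single sentence} $\neg\varphi_n$, read in the language of its predicate letters, has a decidable complete extension. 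So what your construction must deliver is a uniformly computable sequence of single sentences $\theta_n=\neg\varphi_n$ such that $\{n:\theta_n\text{ has a decidable complete extension}\}$ is $\Sigma_3$-complete. Since a schema is one sentence, you are forced into finitely axiomatized theories, and producing finitely axiomatized theories whose decidable-complete-extension behavior encodes an arbitrary $\Pi_3$ condition is precisely the conjunction of Theorem~\ref{hanf2-thm} (Hanf/Peretyat'kin) and Theorem~\ref{gasarch-thm} (Gasarch--Martin); the former is a monograph-length construction, not something recoverable by ``pushing through'' the encoding of Theorem~\ref{dec-non-a0cat-thm}.

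The reason that encoding cannot be elaborated as you propose is a type mismatch. In the proof of Theorem~\ref{dec-non-a0cat-thm}, consistency of a substitution instance certifies facts of the form ``$M$ does not halt with output $1-l$'' because halting is $\Sigma_1$: a possibly nonstandard computation still has the true finite computation as an initial segment, and the contradiction is found there. Here the fact you need to extract from a decidable falsifier is $P(n,x)$, which is $\Pi_2$ (totality of a search), and no finite-initial-segment argument can verify a $\Pi_2$ statement. Concretely, a decidable complete extension of $\neg\varphi_n$ may satisfy your existential ``certifying object'' clause with a nonstandard $x$ or a nonstandard totality certificate, yielding no actual witness $x$ with $P(n,x)$, so $n\in A$ is not contradicted. (There is also a smaller, repairable defect earlier: the instruction ``if for some $y$ no such $z$ exists, in that branch code a noncomputable set into $T_{n,x}$'' is not an effective construction, since the failure of an unbounded search is never detected at any finite stage; one must instead define $T_{n,x}$ outright as a set of sentences whose decidability provably coincides with $P(n,x)$.) As it stands, your argument proves only the easy implication $n\notin A\Rightarrow\varphi_n\notin\mathsf{V_{dec}}$ for your designed theories; the converse implication, which is where $\Pi_3$-hardness lives, is exactly the gap.
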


The proof of Theorem \ref{vidpi3-thm} makes use of the following results due to \cite{Han65},  \cite{Han75} and \cite{fat},\footnote{Specifically,  Theorem \ref{hanf2-thm} is a corollary to Peretyat'kin's result that  from every computably axiomatizable theory $A$, a finitely axiomatizable theory $F$ can be constructed such that $F$ is computably isomorphic to $A$; moreover the construction of $F$ is effective in a c.e.\ index for $A$. This result was conjectured in \cite{Han65} with a proof sketched in \cite{Han75}. A full proof is presented in \cite{fat}. } 
and to \cite{Gasarch1993}, respectively. We write $T_e$ for the $\Pi^0_1$ subclass of $2^\omega$ with index $e$, and $\theta_e$ for an effective enumeration of the sentences of the language $L$ of graphs, which contains a single binary predicate $E$ and the identity predicate.
\begin{theorem}[Hanf/Peretyat'kin]\label{hanf2-thm}
There is a computable function $\sigma$ such that, for each $e$, 
the degrees of paths in the $\Pi^0_1$-class $T_e$ is the same as the degrees of complete $L$-extensions of the theory axiomatized by the sentence $\theta_{\sigma(e)}$; in particular, $T_e$ has a computable path if and only if $\theta_{\sigma(e)}$ has a decidable complete $L$-extension. \end{theorem}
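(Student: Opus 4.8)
The plan is to derive the statement as a corollary of Peretyat'kin's finitization theorem (the deep ingredient, which I use as a black box), after first carrying out an elementary, uniform coding of an arbitrary $\Pi_1$-class as the space of completions of a computably axiomatizable theory. Thus the argument has two stages: first, produce, uniformly in $e$, a computably axiomatizable theory $A_e$ whose complete consistent extensions are in degree-preserving bijection with the paths of $T_e$; second, replace $A_e$ by a finitely axiomatizable theory in the language $L$ of graphs using Peretyat'kin's construction, which preserves the degree spectrum of completions and is itself effective in a c.e.\ index for $A_e$. Tracking uniformity through both stages yields the computable function $\sigma$.

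For the first stage I would use a propositional coding. Fix a computable tree $\widehat{T}_e \subseteq 2^{<\omega}$ with $[\widehat{T}_e] = T_e$, obtained uniformly from $e$. Introduce nullary predicate symbols $p_0, p_1, \dots$ (each $p_n$ is then an atomic sentence, and over a purely nullary language first-order sentences reduce to propositional combinations of the $p_n$, so the completions are exactly the propositional completions), and let $A_e$ be axiomatized by the clauses $\neg\,\bigwedge_{i<|\sigma|} p_i^{\sigma(i)}$ for every string $\sigma \notin \widehat{T}_e$, where $p_i^1 = p_i$ and $p_i^0 = \neg p_i$. This axiom set is decidable uniformly in $e$, so $A_e$ is computably axiomatizable with a c.e.\ index computable from $e$. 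A complete consistent extension $C$ of $A_e$ decides every $p_n$, hence determines $x_C \in 2^\omega$ with $x_C(n)=1 \iff p_n \in C$; consistency with the axioms forces every initial segment of $x_C$ to lie on $\widehat{T}_e$, so $x_C \in T_e$, and conversely each $x \in T_e$ gives the completion $C_x$ consisting of the consequences of $\{p_n^{x(n)} : n \in \omega\}$. The maps $x \mapsto C_x$ and $C \mapsto x_C$ are mutually inverse, and each is uniformly computable from its input (to decide $\varphi \in C_x$ evaluate $\varphi$ under $x$; to compute $x_C(n)$ ask whether $p_n \in C$), so $\deg(C_x) = \deg(x)$ throughout. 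Hence the degree spectrum of completions of $A_e$ is exactly $\{\deg(x) : x \in T_e\}$.

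For the second stage I would invoke Peretyat'kin's theorem, as recorded in the footnote to Theorem \ref{hanf2-thm}: from a c.e.\ index for the computably axiomatizable theory $A_e$ one constructs, effectively, a finitely axiomatizable theory $F_e$ together with a computable isomorphism of the Lindenbaum--Tarski algebras of $A_e$ and $F_e$. Such an isomorphism, being computable together with its inverse, induces a degree-preserving bijection between the complete consistent extensions of $A_e$ and those of $F_e$, so the two theories have the same completion spectrum. A final standard step recodes $F_e$ into the language $L$ of graphs: any finitely axiomatizable theory in a finite relational language is effectively bi-interpretable with one axiomatized by a single $L$-sentence, and such a bi-interpretation again preserves the degree spectrum of completions. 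Setting $\theta_{\sigma(e)}$ to be that sentence, and noting that all three stages are uniform in $e$, defines the computable function $\sigma$ and gives the asserted equality of degree spectra. The ``in particular'' clause is then the case of degree $\mathbf 0$: $T_e$ has a computable path iff some completion of $\theta_{\sigma(e)}$ has computable degree, i.e.\ iff $\theta_{\sigma(e)}$ has a decidable complete $L$-extension.

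The genuinely hard part is Peretyat'kin's finitization in the second stage, used as a black box; its proof (sketched in \cite{Han75}, completed in \cite{fat}) is where all the difficulty resides. The work I would actually carry out is the coding of the first stage together with the bookkeeping verifying that every map in the chain --- the propositional correspondence, the Lindenbaum-algebra isomorphism, and the graph interpretation --- preserves Turing degree on completions and is uniform in $e$. The one point requiring care is the precise sense in which Peretyat'kin's ``computable isomorphism'' transports completions: I would confirm that it is an isomorphism of the effectively presented Lindenbaum--Tarski algebras, since this is exactly what renders the Stone spaces of completions effectively homeomorphic and hence forces the degree spectra to coincide.
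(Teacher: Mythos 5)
Your proposal is correct and follows exactly the route the paper itself indicates: the paper gives no proof of this theorem, citing it (in the footnote) as a corollary of Peretyat'kin's finitization theorem, which is precisely the black box you invoke, and your stage-one propositional coding of $\Pi_1$-classes and final recoding into the language of graphs are the standard routine steps the citation leaves implicit. Your flagged point of care --- that Peretyat'kin's computable isomorphism of Lindenbaum--Tarski algebras has a computable inverse (by searching for a preimage together with a proof of equivalence) and hence transports completions degree-preservingly --- is exactly the right detail to verify.
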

\begin{theorem}[Gasarch-Martin]\label{gasarch-thm}
The set of $e$ such that $T_e$ has no computable path is $\Pi_3$-complete.
\end{theorem}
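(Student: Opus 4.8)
The plan is to prove both inclusions: that the index set $N=\{e: T_e\text{ has no computable path}\}$ is $\Pi_3$, and that it is $\Pi_3$-hard. For hardness I would reduce a fixed $\Pi_3$-complete set, most conveniently the complement of the cofiniteness problem, $\mathrm{Coinf}=\{e:\overline{W_e}\text{ is infinite}\}$, which is $\Pi_3$-complete because $\mathrm{Cof}=\{e:W_e\text{ is cofinite}\}$ is $\Sigma_3$-complete.

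\emph{Upper bound.} Fix a uniformly computable presentation of $T_e$ as the set of paths through a computable tree $\mathrm{Tr}_e\subseteq 2^{<\omega}$. Then ``$T_e$ has a computable path'' can be written as $\exists i\,[\,\varphi_i\text{ is a total }\{0,1\}\text{-valued function}\ \wedge\ \forall n\,(\varphi_i{\restriction}n\in \mathrm{Tr}_e)\,]$. The totality clause is $\Pi_2$, and for each fixed $n$ the clause $\varphi_i{\restriction}n\in\mathrm{Tr}_e$ is $\Sigma_1$ (wait for $\varphi_i(0),\dots,\varphi_i(n-1)$ to converge, then run the decision procedure for $\mathrm{Tr}_e$), so the path clause $\forall n(\cdots)$ is $\Pi_2$. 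The matrix is therefore $\Pi_2$, and prefixing $\exists i$ yields a $\Sigma_3$ description of ``has a computable path.'' Hence $N$ is $\Pi_3$.

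\emph{Lower bound.} It suffices to build, uniformly in $e$, a computable tree $S_e\subseteq 2^{<\omega}$ such that $T_{f(e)}:=[S_e]$ has a computable member if and only if $W_e$ is cofinite; then $e\in\mathrm{Coinf}$ if and only if $[S_e]$ has no computable member, giving $\mathrm{Coinf}\le_m N$. The engine is the fixed nonempty $\Pi_1$ class $Q\subseteq 2^\omega$ of $\{0,1\}$-valued diagonally noncomputable functions, which has no computable member. I would attach a copy of $Q$ to a computable ``spine,'' placing the $Q$-obstruction on a block of coordinates indexed by the current stage's approximation to the deficiency set $\overline{W_e}$: while that approximation above a given threshold remains infinite the obstruction is a full copy of $Q$ and admits no computable member, whereas once it is finite the obstruction collapses to a finite clopen condition through which a computable path is visible.

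The main obstacle is that a $\Pi_1$ class can only lose points as the enumeration proceeds, so one cannot literally ``shrink'' the obstruction as numbers enter $W_e$; the standard remedy is a movable-marker (finite-injury) organization in which, each time a number leaves the deficiency approximation, the obstruction is transferred to a fresh block of coordinates and the abandoned block is opened up to an evident computable path, the whole remaining a single computable tree. Correctness then splits into the two usual verifications: if $W_e$ is cofinite, a true threshold is reached after finitely many transfers, the surviving obstruction is finite, and one computes a member by taking (say) its leftmost branch; if $W_e$ is coinfinite, the markers move infinitely often and every total computable $X$ is eventually forced into the persistent $Q$-obstruction and diagonalized, by a true-stage argument. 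Combining the two bounds gives that $N$ is $\Pi_3$-complete.
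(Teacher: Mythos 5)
Note first that the paper does not prove this theorem at all: it is imported by citation from Gasarch--Martin, so your proposal can only be judged on its own merits. Your upper bound is essentially right (one caveat: under the standard indexing of $\Pi_1$ classes the tree for $T_e$ is co-c.e.\ rather than computable, so the clause ``$\varphi_i$ restricted to $n$ is on the tree'' is a conjunction of a $\Sigma_1$ and a $\Pi_1$ condition; this still sits inside $\Pi_2$ after the $\forall n$, and the prefix $\exists i$ still gives $\Sigma_3$). Your choice of $\mathrm{Coinf}$ as the $\Pi_3$-complete set to reduce is also fine.

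The lower bound, however, has a genuine gap: a \emph{single} movable obstruction whose transfers are triggered by enumerations into $W_e$ can only detect whether $W_e$ is infinite, which is a $\Pi_2$ distinction, not whether $W_e$ is cofinite, which is the needed $\Sigma_3$ distinction. Concretely, take $W_e$ to be the set of even numbers: then $e\in\mathrm{Coinf}$, yet numbers enter $W_e$ at infinitely many stages, so in your construction the obstruction is transferred infinitely often, every block is eventually abandoned and opened, and the resulting class acquires a computable member (follow the openings, which are observable at finite stages) --- exactly what the coinfinite case must forbid. Symmetrically, your stated verification ``if $W_e$ is cofinite, a true threshold is reached after finitely many transfers'' is false: cofinite sets are infinite, so under your trigger the transfers never stop. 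The standard repair requires one strategy per threshold $k$: build uniformly a component $R_{e,k}$ whose markers are released only as the elements of $[k,\infty)$ enter $W_e$, so that $R_{e,k}$ has a computable member if and only if $[k,\infty)\subseteq W_e$ (a $\Pi_2$ event, which a movable-marker/release construction does capture), and then form an effective join over all $k$. That join introduces a second problem you do not address: the naive spine $\bigcup_k 0^k 1\, R_{e,k}$ is closed only if it contains the limit point $0^\omega$, which is a computable member and destroys the reduction; the usual remedy is to hang the components off the nodes removed from the tree of your DNC class $Q$, so that the limit points of the components are paths of $Q$ and hence noncomputable. With the per-threshold components and this pasting, your outline becomes a correct proof; as written, it proves only hardness for a $\Pi_2$-style problem.
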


Since our schematic language $S$ for validities does not contain identity, we need to specify a translation of $L$-sentences $\theta$ into $S$-sentences $\theta^*$ in order to derive  Theorem \ref{vidpi3-thm} from Theorems \ref{hanf2-thm} and \ref{gasarch-thm}. Let $R$ and $\approx$ be binary predicate letters of $S$ and let $\chi(R,\approx)$ be the $S$-sentence that expresses that $\approx$ is a congruence with respect to $R$. If $\theta$ is an $L$-sentence, $\theta^*$ is the $S$-sentence 
\[\chi(R,\approx)\wedge \theta[E/R,{=}/{\approx}].\]
Theorem \ref{vidpi3-thm} is an immediate corollary to Theorems \ref{hanf2-thm} and  \ref{gasarch-thm}, and the next lemma.
\begin{lemma}\label{vidpi3-lem}
An $L$-sentence $\theta$  has a complete decidable $L$-extension if and only if $\neg\theta^*\not\in\mathsf{V_{dec}}$.
\end{lemma}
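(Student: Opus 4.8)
The plan is to prove the two directions separately, with the forward direction essentially immediate and the reverse direction carrying the real content.

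For the forward direction, suppose $\theta$ has a complete decidable $L$-extension $T^\circ$. I would use $T^\circ$ itself as the witnessing decidable theory and take the ``identity'' substitution $R\mapsto E(y_1,y_2)$, $\approx\mapsto(y_1=y_2)$. Under this substitution $\chi(R,\approx)$ becomes the assertion that genuine equality is a congruence for $E$, which is logically valid, while $\theta[E/R,{=}/{\approx}]$ is carried back to $\theta$ itself; hence the resulting substitution instance of $\neg\theta^*$ is logically equivalent to $\neg\theta$. Since $\theta\in T^\circ$ and $T^\circ$ is consistent, $\neg\theta\notin T^\circ$, so $\neg\theta^*\notin\mathsf{V}(T^\circ)$, and as $T^\circ$ is one of the decidable theories over which the intersection is taken, $\neg\theta^*\notin\mathsf{V_{dec}}$.

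For the reverse direction, assume $\neg\theta^*\notin\mathsf{V_{dec}}$, so there is a decidable $L'$-theory $T$ and $L'$-formulas $\psi_R(y_1,y_2),\psi_\approx(y_1,y_2)$ for which the substitution instance $\chi(\psi_R,\psi_\approx)\wedge\theta[\psi_R,\psi_\approx]$ is consistent with $T$. Since adjoining a single sentence to a decidable theory yields a decidable theory, I may replace $T$ by its extension containing this sentence and assume $T$ is decidable, consistent, and proves it. The pair $(\psi_R,\psi_\approx)$ now defines an interpretation of $L$-structures in models of $T$, with domain the whole model, $E$ read as $\psi_R$, and equality read as $\psi_\approx$; the clause $\chi$ guarantees that $\psi_\approx$ is a congruence, so the interpretation is admissible and the atom-substitution map $\phi\mapsto\phi^I:=\phi[\psi_R,\psi_\approx]$ commutes with the connectives and quantifiers and preserves provability over $T$. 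I would then invoke the effective Lindenbaum theorem: because $T$ is decidable, the consistency of $T$ together with finitely many adjoined sentences is itself decidable (one checks whether the negation of their conjunction lies in $T$), so $T$ extends to a complete decidable $L'$-theory $T^c$. Pulling back, set $\Theta=\{\phi\in\mathrm{Sent}(L):\phi^I\in T^c\}$. Completeness and decidability of $T^c$, together with $(\neg\phi)^I=\neg(\phi^I)$, make $\Theta$ complete and decidable; admissibility of the interpretation makes $\Theta$ consistent and deductively closed; and $\theta^I\in T$ gives $\theta\in\Theta$. Thus $\Theta$ is a complete decidable $L$-extension of $\theta$.

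The main obstacle is to carry out the reverse direction without overreaching. The delicate point is the effective Lindenbaum step and the interpretation lemma underlying it: I must verify that, under the congruence clause $\chi$, the translation $\phi\mapsto\phi^I$ is both sound and faithful, so that consistency transfers between $\Theta$ and $T^c$. It is essential here that $T$ is decidable rather than merely computably axiomatizable—this is exactly what makes each finite-consistency query decidable and lets the Lindenbaum construction be effective. This does not prove too much: $\theta$ by itself is only finitely (hence c.e.) axiomatized, for which finite-consistency is merely co-c.e., so no contradiction with the Hanf/Peretyat'kin phenomenon arises, and it is precisely the passage through a decidable witness theory $T$ that supplies the decidability needed to complete $\theta$.
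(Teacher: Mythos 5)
Your proof is correct and follows essentially the same route as the paper's: the identity substitution $R\mapsto E(y_1,y_2)$, $\approx\mapsto(y_1=y_2)$ for the forward direction, and, for the converse, an effective Lindenbaum completion of the decidable theory $T$ augmented by the substitution instance, followed by passage to a complete decidable $L$-theory via the congruence-quotient interpretation. The only cosmetic difference is that the paper completes a definitional expansion of $T$ by new symbols $E$ and $\approx$ before pulling back, whereas you pull back directly along the substitution map $\phi\mapsto\phi^I$; your write-up makes explicit the interpretation lemma and the effectiveness of the Lindenbaum step, both of which the paper leaves implicit.
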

 \begin{proof}
 The left to right direction is immediate: if $T$ is a decidable complete $L$-extension of $\theta$, then $\neg\theta^*\not\in\mathsf{V}(T)$, hence $\neg\theta^*\not\in\mathsf{V_{dec}}$.
 
 For the right to left direction, suppose that  $\neg\theta^*\not\in\mathsf{V_{dec}}$, and let $T$ be a decidable theory with $\neg\theta^*\not\in\mathsf{V}(T)$. Thus, there are formulas $\varphi$ and $\psi$ in the language of $T$ such that 
 \begin{equation}\label{lem-eq-1}
 \neg\theta^*[R/\varphi,{\approx}/\psi]\not\in T.
 \end{equation}
 It follows from (\ref{lem-eq-1}) that we may extend $T$ to a decidable complete theory $T^*$ with 
  \begin{equation}\label{lem-eq-2}
\{\theta^*[R/\varphi,{\approx}/\psi],(\forall x)(\forall y)(Exy\leftrightarrow\varphi), (\forall x)(\forall y)(x\approx y\leftrightarrow\psi)\}\subseteq T^*.
 \end{equation}
But now it follows from (\ref{lem-eq-2}) that there is a complete decidable $L$-extension of $\theta$.
 \end{proof}
 \begin{proof}(of Theorem \ref{vidpi3-thm})
 It follows at once from Theorem \ref{hanf2-thm} and Lemma  \ref{vidpi3-lem} that the map from $e$ to $\neg\theta_{\sigma(e)}^*$ is a computable reduction from the collection of $\Pi^0_1$ classes $T_e$ with no computable path to $\mathsf{V_{dec}}$. Hence, by Theorems  \ref{cobham-thm} and \ref{gasarch-thm}, $\mathsf{V_{dec}}$ is $\Pi_3$-complete.
 \end{proof}

Note that the above proof still works if we restrict $T$ to decidable
complete theories in the definition of $\mathsf{V_{dec}}$.

Let $\mathsf{V_{fa}}=\bigcap_T\mathsf{V}(T)$ where $T$ ranges over all finitely axiomatizable complete theories.\footnote{\cite{szmielew} introduce the study of $\mathsf{V_{fa}}$.}
 Vaught notes that $\mathsf{V_{fa}}$ is $\Pi_3$, and suggests the problem of determining its exact arithmetical classification.\footnote{\cite{Vau60}, p.~51.} So far as we are aware, this problem remains open.

\section{The validities of undecidable theories}

It is also interesting to consider the complexity of $\mathsf{V}(T)$
when $T$ is undecidable. In the proof of
Theorem~\ref{dec-non-a0cat-thm}, the decidability of $T$ is not used
in showing that $\mathsf{V}(T)$ has PA degree, so we have the
following result.

\begin{theorem}\label{pathm}
If $T$ is a non-$\aleph_0$-categorical complete theory with infinite
models, then $\mathsf{V}(T)$ has PA degree.
\end{theorem}

Combining this fact with the extension of Arslanov's Completeness
Criterion by \cite*{JLSS}, we have the
following extension of Theorem~\ref{dec-non-a0cat-thm}.

\begin{corollary}
Let $\mathbf{d}$ be an $n$-CEA degree for some $n$. If $T$ is a
$\mathbf{d}$-computable, non-$\aleph_0$-categorical, complete theory
with infinite models, then $\mathbf{d} \oplus \mathsf{V}(T)$ computes
$\emptyset'$.
\end{corollary}

\begin{proof}
By Theorem~\ref{pathm}, the degree of $\mathsf{V}(T)$ is PA, and by
the relativized form of Theorem~\ref{dec-val-pi1-thm}, it is also
$\mathbf{d}$-c.e. Thus the degree of $\mathbf{d} \oplus \mathsf{V}(T)$
is PA and $(n+1)$-CEA. By the aforementioned work of Jockusch, Lerman,
Soare, and Solovay, this degree computes $\emptyset'$.
\end{proof}

Model-theoretic properties can also have an effect on the complexity
of $\mathsf{V}(T)$. For instance, as noted in Footnote~\ref{Cfoot}, in
the proof of Theorem~\ref{dec-non-a0cat-thm}, the computation $C$
might be non-standard. But in this case, still using the notation in
that proof, the formula substituted in for $X_\leq$ in witnessing the
satisfaction of $\phi^l_n$ defines a preorder with an infinite strict
chain, which is one way to say that $T$ has the strict order
property. Thus, if T is NSOP, i.e., if it does not have the strict
order property, then $\neg\phi^l_n \notin \mathsf{V}(T)$ actually
implies that $M$ halts on input $n$ with output $l$. This fact allows
us to compute the sets $Z^0$ and $Z^1$, rather than merely separate
them, using $\mathsf{V}(T)$. Choosing M to be a universal Turing
machine now yields the following.

\begin{theorem}
If $T$ is a non-$\aleph_0$-categorical complete NSOP theory with
infinite models, then $\mathsf{V}(T)$ computes $\emptyset'$.
\end{theorem}

The following result stands in contrast to the previous two results.

\begin{theorem}
Let $\mathbf{d}$ be a PA degree. Then there is a
$\mathbf{d}$-computable, complete, non-$\aleph_0$-categorical theory
$T$ with infinite models, such that $\mathsf{V}(T)$ is also
$\mathbf{d}$-computable.
\end{theorem}

\begin{proof}
We will use the fact that if $\Gamma$ is a consistent set of sentences
with no finite models in a language $\mathcal L_0$, and $\sigma$ is a
consistent, equality-free sentence in a language $\mathcal L_1$
disjoint from $\mathcal L_0$, then $\Gamma \cup \{\sigma\}$ is a
consistent set of $(\mathcal L_0 \cup \mathcal L_1)$-sentences.

Let $\mathcal L$ be a language with infinitely many predicate symbols
of each arity. (The proof will work the same whether or not $\mathcal
L$ includes equality.) Let $P_0,P_1,\ldots$ form an infinite,
coinfinite subset of the unary predicates of $\mathcal L$, and let
$\Gamma$ consist of $\exists x P_i(x)$ for each $i$, and $\forall x
\neg (P_i(x) \wedge P_j(x))$ for each $i \neq j$. Then $\Gamma$ is
consistent and has no finite models, and any consistent extension of
$\Gamma$ has infinitely many $1$-types, and hence cannot be
$\aleph_0$-categorical.

For each binary string $\alpha$, we will define a (not necessarily
consistent) set of $\mathcal L$-sentences $\Sigma_\alpha$, and a set
of $\mathcal S$-sentences $V_\alpha$. Let $\sigma_0,\sigma_1,\ldots$
be an effective list of all $\mathcal L$-sentences, and let
$\phi_0,\phi_1,\ldots$ be an effective list of all $\mathcal
S$-sentences (both without repetitions). For the empty string
$\lambda$, let $\Sigma_\lambda=\Gamma$ and let
$V_\lambda=\emptyset$. Given $\Sigma_\alpha$ and $V_\alpha$, proceed
as follows.

If $|\alpha|=2n$ for some $n$ then let $\Sigma_{\alpha 0}=\Sigma_\alpha
\cup \{\neg \sigma_n\}$, let $\Sigma_{\alpha 1}=\Sigma_\alpha \cup
\{\sigma_n\}$, and let $V_{\alpha 0} = V_{\alpha 1} = V_\alpha$.

If $|\alpha|=2n+1$ for some $n$ then let $V_{\alpha 0} = V_\alpha$,
let $V_{\alpha 1} = V_{\alpha} \cup \{\phi_n\}$, let $\Sigma_{\alpha
1} = \Sigma_\alpha$, and define $\Sigma_{\alpha 0}$ as
follows. Write $\phi_n$ as
$\phi_n(X^1_1,\ldots,X^1_k,\ldots,X_1^m,\ldots,X_k^m)$ (i.e., the
predicate symbols are among the ones exhibited). Let
$P^1_1,\ldots,P^1_k,\ldots,P_1^m,\ldots,P_k^m$ be pairwise distinct
predicate symbols of $\mathcal L$ such that each $P^j_i$ has arity
$j$, and no $P^j_i$ appears in $\Sigma_\alpha$. Let $\Sigma_{\alpha 0}
= \Sigma_\alpha \cup
\{\neg\phi_n(P^1_1,\ldots,P^1_k,\ldots,P_1^m,\ldots,P_k^m)\}$. Notice
that if $\Sigma_\alpha$ is consistent and $\phi_n$ is not a validity,
then $\Sigma_{\alpha 0}$ is consistent, by the fact mentioned in the
beginning of the proof.

Now for an infinite binary sequence Z, let $T_Z = \bigcup_{\alpha
\prec Z} \Sigma_{\alpha}$, and let $V_Z = \bigcup_{\alpha \prec Z}
V_\alpha$. It is easy to check that $T_Z$ and $V_Z$ are both
$Z$-computable. Now suppose that $T_Z$ is consistent. It is easy to
check that $T_Z$ is a complete, non-$\aleph_0$-categorical
theory. Furthermore, if $\phi_n \notin V_Z$, then $Z(2n+1)=0$, so
$T_Z$ contains a sentence witnessing that $\phi_n \notin
\mathsf{V}(T_Z)$. Thus $V_Z = \mathsf{V}(T_Z)$ if{}f $V_Z \subseteq
\mathsf{V}(T_Z)$, i.e., if{}f $\{\phi_n : Z(2n+1)=1\} \subseteq
\mathsf{V}(T_Z)$.

Let $\mathcal P$ be the class of all $Z$ such that $T_Z$ is consistent
and $\{\phi_n : Z(2n+1)=1\} \subseteq \mathsf{V}(T_Z)$. Both of these
conditions are $\Pi^0_1$, so $\mathcal P$ is a $\Pi^0_1$ class. We now
show that it is nonempty. Define a binary sequence $Y$ as
follows. Given $\alpha = Y \uhr 2n$, let $Y(2n)=1$ if $\Sigma_{\alpha}
\cup \{\sigma_n\}$ is consistent, and let $Y(2n)=0$ otherwise. Let
$Y(2n+1)=1$ if $\phi_n \in \mathsf{V}$ (i.e., $\phi_n$ is a validity),
and let $Y(2n+1)=0$ otherwise. It is easy to show by induction that
$\Sigma_\alpha$ is consistent for all $\alpha \prec Y$, so that $T_Y$
is consistent, and clearly $\{\phi_n : Y(2n+1)=1\} = \mathsf{V}
\subseteq \mathsf{V}(T_Y)$.

Thus $\mathcal P$ is a nonempty $\Pi^0_1$ class, and hence, by the
Scott Basis Theorem, has a $\mathbf{d}$-computable member $Z$. Then
$T_Z$ is a $\mathbf{d}$-computable, complete,
non-$\aleph_0$-categorical theory with infinite models, and
$\mathsf{V}(T_Z)=V_Z$ is also $\mathbf{d}$-computable.
\end{proof}

Notice that the sequence $Y$ in the above proof is
$\emptyset'$-computable, and $T_Y$ is thus a $\emptyset'$-computable
complete theory such that $\mathsf{V}(T_Y)=\mathsf{V}$. Conversely, if
$\mathsf{V}(T) = \mathsf{V}$ then $\mathsf{V}$ is both $\Sigma_1$ and
$\Pi_1$ relative to $T$, so $T$ computes $\mathsf{V}$, and hence
computes $\emptyset'$.

Notice also that, in the above proof, for any $\phi_n \notin
\mathsf{V}$, the restriction of $\mathcal P$ to elements $Z$ such that
$Z(2n+1)=0$ is still a $\Pi^0_1$ class containing $Y$, and hence has a
$\mathbf{d}$-computable member. Thus, for any PA degree $\mathbf{d}$,
the set of sentences that are validities of every
$\mathbf{d}$-computable complete theory is just $\mathsf{V}$, and
hence is $\Sigma^0_1$, in contrast to Theorem~\ref{vidpi3-thm}. It
would be interesting to study this set for degrees $\mathbf{d}$ that
are neither computable nor PA.

There is also likely more that can be said about the possible
complexity of $\mathsf{V}(T)$ for undecidable $T$, including how
this complexity is affected by model-theoretic properties of $T$
beyond $\aleph_0$-categoricity and NSOP.

\bibliography{HTW}

\end{document}